\definecolor{rltblue}{rgb}{0,0,0.4}
\definecolor{drkgreen}{rgb}{0,0.4,0}
\definecolor{drkred}{rgb}{0.5,0,0}
\newtheorem{thm}{Theorem}[section]
\newtheorem{lemma}[thm]{Lemma}
\newtheorem{proposition}[thm]{Proposition}
\newtheorem{cor}[thm]{Corollary}
\newtheorem{theorem}[thm]{Theorem}
\newtheorem{corollary}[thm]{Corollary}
\theoremstyle{definition}
\newtheorem{definition}[thm]{Definition}
\newtheorem{exmp}[thm]{Example}
\theoremstyle{remark}
\newtheorem{observation}[thm]{Observation}
\newtheorem{remark}[thm]{Remark}
\newtheorem{historic}[thm]{Historic Remark}
\newtheorem{question}{Question}
\theoremstyle{plain}
\newtheorem*{mainquestion}{Main Question}
\newtheorem{prop}[thm]{Proposition}
\newtheorem{lem}[thm]{Lemma}
\newcounter{contenumi}
\def\D{{\mathcal D}}
\def\dprime{^{\prime\prime}}
\def\la{\langle}
\def\ra{\rangle}
\def\and{\mathrel{\&}}
\def\isom{\cong}
\def\Si{\Sigma}
\newcommand\rightdate[1]{\footnotetext{  Saved: #1 \\ Compiled: \today}}
\def\A{\mathcal{A}}
\def\B{\mathcal{B}}
\def\C{\mathcal{C}}
\def\F{\mathcal{F}}
\def\om{\omega}
\def\bbar{\bar{b}}
\def\b{\beta}
\def\a{\alpha}
\def\b{\beta}
\def\P{\mathop{\mathcal P}}
\def\QQ{{\mathbb Q}}
\def\M{{\mathcal M}}
\def\A{\mathcal A}
\def\W{\mathcal W}
\def\L{\mathcal{L}}
\def\abar{\bar{a}}
\def\bbar{\bar{b}}
\def\KK{{\mathbb K}}
\def\RR{{\mathbb R}}
\def\Lhat{{\hat{\L}}}
\def\Ahat{\hat{\A}}
\def\g{\gamma}
\def\R{\mathtt{R}}
\newcommand{\revmathfont}[1]{{\textsf{#1}}}
\def\Si{\Sigma}
\def\ZFC{\revmathfont{ZFC}}
\def\om{\omega}
\def\N{\mathcal N}
\def\L{\mathcal{L}}
\def\PP{\mathbb{P}}
\def\gl{\dot{g_0}}
\def\gr{\dot{g_1}}
\def\implies{\Rightarrow}
\def\bfK{{\bf K}}
\def\Lan{{\mathfrak L}}
\def\lemuch{\le_w}
\def\legen{\le_w^*}
\def\equivmuch{\equiv_w}
\def\equivgen{\equivmuch^*}
\def\equivin{\equiv_{\infty\omega}}
\def\Vtt{{\tt{V}}}
\begin{document}


\title{Computable structures in generic extensions}

\author{Julia Knight}





\author{Antonio Montalb\'an}





\author{Noah Schweber}




\address{Department of Mathematics\\
University of Notre Dame\\
 USA}
\email{Julia.F.Knight.1@nd.edu}
\urladdr{\href{http://math.nd.edu/people/faculty/julia-f-knight/}{http://math.nd.edu/people/faculty/julia-f-knight/}}

\address{Department of Mathematics\\
University of California, Berkeley\\
 USA}
\email{antonio@math.berkeley.edu}
\urladdr{\href{http://www.math.berkeley.edu/~antonio/index.html}{www.math.berkeley.edu/$\sim$antonio}}
 
\address{Department of Mathematics\\
University of California, Berkeley\\
 USA}
\email{schweber@math.berkeley.edu}
\urladdr{\href{http://www.math.berkeley.edu/~schweber}{http://www.math.berkeley.edu/$\sim$schweber}}

\thanks{The second author was partially supported by the Packard Fellowship. The third author was partially supported by the Sciobereti Fellowship.}

\rightdate{September 23, 2014. Submitted.}      
\maketitle


%

\begin{abstract} In this paper, we investigate connections between structures present in every generic extension of the universe $V$ and computability theory. We introduce the notion of {\em generic Muchnik reducibility} that can be used to to compare the complexity of uncountable structures; we establish basic properties of this reducibility, and study it in the context of {\em generic presentability}, the existence of a copy of the structure in every extension by a given forcing.  We show that every forcing notion making $\om_2$ countable generically presents some countable structure with no copy in the ground model; and that every structure generically presentable by a forcing notion that does not make $\om_2$ countable has a copy in the ground model.  We also show that any countable structure $\mathcal{A}$ that is generically presentable by a forcing notion not collapsing $\om_1$ has a countable copy in $V$, as does any structure $\mathcal{B}$ generically Muchnik reducible to a structure $\mathcal{A}$ of cardinality $\aleph_1$.  The former positive result yields a new proof of Harrington's result that counterexamples to Vaught's conjecture have models of power $\aleph_1$ with Scott rank arbitrarily high below $\om_2$. Finally, we show that a rigid structure with copies in all generic extensions by a given forcing has a copy already in the ground model.
\end{abstract}

\section{Introduction}

In computable structure theory, one studies the complexity of structures using techniques from computability theory. Almost all of this work concerns countable structures; much less is known about the complexity of uncountable structures. However, the computability theory of uncountable structures has received more attention in the last few years. (See for instance the proceedings volume of the conference Effective Mathematics of the Uncountable \cite{EMU}.)  One idea for studying the complexity of an uncountable structure that seems new is to consider what happens to the structure when its domain is made countable.

Before making this idea more concrete, we recall the notion of \emph{Muchnik reducibility} between countable structures.
This is the standard way in computable structure theory to say that one structure is more complicated than another, in the sense that it harder to compute.

\begin{definition}
Given countable structures $\A$ and $\B$ we say that $\A$ is {\em Muchnik reducible} to $\B$, and we write $\mathcal{A}\lemuch\mathcal{B}$, if, from any copy of $\B$, we can compute a copy of $\A$.
\end{definition}

On its face, this notion is limited to countable structures.  However, by examining generic extensions of the set-theoretic universe, $V$, we can extend it further:

\begin{definition}[Schweber]
For a pair of structures $\A$ and $\B$, not necessarily countable in $V$, we say that $\A$ is {\em generically Muchnik reducible} to $\B$, and we and write 
$\mathcal{A}\legen\mathcal{B}$, if for any generic extension $V[G]$ of the set theoretic universe $V$ in which both structures are countable, we have 
\[V[G]\models\text{$\A\lemuch\B$}. \] 
\end{definition}

In Section \ref{Basic}, we will prove the basic properties of this reducibility. 
We will show that it coincides with Muchnik reducibility on countable structures; i.e., if $\A$ and $\B$ are countable, then $\A\lemuch\B$ if and only if $\A\legen\B$ (Corollary \ref{for: much vs gen}).
Another important fact is that we do not need to consider all the generic extensions that make $\A$ and $\B$ countable.  We will prove that for any two such generic extensions if $\A\lemuch\B$ holds in one, then it holds in the other (Lemma \ref{UNI}).  This shows that generic Muchnik reducibility is a very absolute, and hence, natural, notion of computability-theoretic complexity. 

We will also show that the equivalence $\equivgen$, induced from the reducibility $\legen$, respects $\L_{\infty\om}$-elementary equivalence. In Section \ref{Examples}, we will also exhibit some examples of this reducibility.  For instance, we show that the countable structures generically Muchnik reducible to the linear order $\om_1$ are precisely those Muchnik reducible to some countable well-ordering, and that there are two natural structures, with every countable structure generically reducible to each, one of which is strictly below the other under $\legen$.

Closely related to generic reducibility is \it generic presentability\rm. 

\begin{definition} 
Let $\PP $ be a poset in $V$ and let $\A$ be a structure that lives and is countable in a model, $W$, of $\ZFC$ extending $V$. 
The structure $\A$ is {\em generically presented by $\PP $} if for any $G$ that is $\PP $-generic over $V$, there is a copy $\B$ of $\mathcal{A}$ in $V[G]$ with domain $\omega$. (Here, $\B$ is a {\em copy} of $\A$ in the sense that there is an isomorphism between them in $W[G]$.)
A structure is {\em generically presented} if it is generically presented by some poset $\PP \in V$.  
\end{definition}


\begin{remark}\label{remark on copies}
Throughout this paper we will be considering structures like $\A$ above that do not necessarily live in $V$, but in some extension of $W$.
This is, of course, the point of generic presentability: to study structures which do not live in $V$, but which $V$ can nonetheless talk about to a high level of precision. Once we know that $\A$ is generically presentable, we can replace it by its copy inside one of the generic extensions $V[G]$, and, hence, we could have assumed $W$ was $V[G]$ to begin with.
In the definition above, we only require the isomorphism between $\A$ and $\B$ to be in $W[G]$; this is because it only makes sense to talk about isomorphisms between copies that live in the same model, and we have $\A\in W$ and $\B\in V[G]$.
We will assume this whenever we use the word ``copy'' in the rest of the paper.

A different point of view some readers might prefer is to consider countable models $M$ of $\ZFC$ and define what it means for a countable structure $\A\in V$ to be {\em generically presented by $\PP $ over $M$}.
In this case, since $M$ is countable, it is enough to consider generic extensions $M[G]$ that are still included in $V$, and when we refer to isomorphisms between structures, we just mean isomorphism in $V$.

For clarity, we will use the term {\em $\omega$-copy} to mean a copy $\mathcal{B}$ of a structure $\mathcal{A}\in V$ with domain $\omega$ which may live in a larger set-theoretic universe. So, for example, the field of real numbers has an $\omega$-copy living in any generic extension where $(2^{\aleph_0})^V$ is countable.
\end{remark}

It is well-known that if a set $S$ is in $V[G]$ for every $\PP$-generic $G$, then $S$ must belong to $V$ already (Solovay \cite{Sol70}). However, the situation for isomorphic copies of a given structure is more complicated.  There are cases in which the analogous fact is true, and there are cases in which it is not. This paper is devoted to analyzing this situation, and its connection to uncountable computability. In particular, if $\mathcal{A}\le_w^*\mathcal{B}$, then $\mathcal{B}$ contains all the information necessary to build $\mathcal{A}$ --- up to a certain amount of genericity. To what extent is this genericity actually necessary? Ted Slaman formulated this question as follows:

\begin{mainquestion}[Slaman]
\label{Slaman} 
Let $\A$ and $\B$ be structures, and suppose $\B$ is in G\"odel's $L$.
Does $\mathcal{A}\legen\mathcal{B}$ imply that there is a copy of $\A$ in $L$? 
\end{mainquestion}

(Note that the isomorphism between $\A$ and its hypothetical copy is not required to live in~$\L$.)

We begin by studying the role of forcing-theoretic properties in generic presentability. We prove:

\begin{theorem} \label{notcollapsingom2}
Let $\mathcal{A}$ be a structure that that lives and is countable in an extension $W$ of $V$.
If $\A$ is generically presentable by a forcing notion that does not make $\om_2$ countable, then $\mathcal{A}$ has a copy in $V$.
\end{theorem}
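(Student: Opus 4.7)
The plan is to locate a Scott sentence of $\A$ inside $V$ and then use it to construct a copy of $\A$ in $V$. The key ingredient is a symmetry argument using mutually generic filters and the absoluteness of the back-and-forth hierarchy.

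Let $G_0$ and $G_1$ be two mutually $\PP$-generic filters over $V$; by the generic presentability hypothesis, choose $\om$-copies $\B_i\in V[G_i]$ of $\A$. In $W[G_0,G_1]$ the structures $\B_0$ and $\B_1$ are both isomorphic to $\A$, hence $\Lkarp$-equivalent. Because back-and-forth relations on countable structures are absolute between transitive models containing those structures, this equivalence already holds in $V[G_0,G_1]$, and by Scott's theorem we obtain an isomorphism $\B_0\cong\B_1$ in $V[G_0,G_1]$. It follows that the Scott sentence $\sigma_G$ of $\B_G$ computed in $V[G]$ is, as a set-theoretic object, the same for every $\PP$-generic $G$. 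In particular the common value $\sigma$ satisfies $\sigma\in V[G_0]\cap V[G_1]$, and the mutual-generic intersection lemma gives $V[G_0]\cap V[G_1]=V$, so $\sigma\in V$.

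Next, the non-collapse hypothesis bounds the complexity of $\sigma$. The Scott rank of $\B_G$ is strictly below $\om_1^{V[G]}$, and the assumption that $\PP$ does not make $\om_2^V$ countable forces $\om_1^{V[G]}\leq\om_2^V$. Hence $\sigma$ is an infinitary sentence of rank $<\om_2^V$, coded inside $V$. Finally, $\sigma$ is consistent, as witnessed by $\A$ itself in $W$. Applying a Barwise-style model existence result inside $V$ (a transfinite Henkin construction of length $<\om_2^V$), one builds a model $\C\in V$ of $\sigma$, of cardinality at most $\aleph_1^V$. By the defining property of the Scott sentence, $\C\cong\A$ in $W$, which is exactly a copy of $\A$ in $V$.

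The main obstacle is the last step: when $\PP$ does not collapse $\om_1^V$, the Scott sentence $\sigma$ lies in $\Linf$ and the standard Barwise completeness theorem produces a countable model in $V$ immediately; but when $\PP$ collapses $\om_1^V$ (while still preserving $\om_2^V$), $\sigma$ may genuinely live in $L_{\om_2^V,\om}^V$, and one must argue that the consistency of $\sigma$ descends to $V$ and that a model can be constructed there. The role of the hypothesis on $\om_2^V$ is precisely to make this work: all back-and-forth approximations are indexed by ordinals below $\om_2^V$, which are available inside $V$, and the resulting Henkin construction terminates at a structure of size $\leq\aleph_1^V$.
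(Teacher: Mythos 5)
Your first two steps are sound and run parallel to the paper's argument: using mutually generic filters and the absoluteness of the back-and-forth analysis to pull the (canonically coded) Scott data into $V$ is essentially the paper's use of Solovay's Theorem \ref{Solovay} in Lemmas \ref{presenting Scott analysis} and \ref{presenting ages}, and the observation that non-collapse of $\om_2^V$ forces the Scott sentence to have hereditary size $\le\aleph_1$ in $V$ is Corollary \ref{anothersizelemma}. The gap is in your final step. There is no ``Barwise-style model existence result'' for sentences of $L_{\om_2\om}$: Barwise completeness and the model existence theorem are theorems about \emph{countable} (admissible) fragments, and their naive extensions to uncountable conjunctions are false. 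In particular, ``$\sigma$ has a countable model in some outer model'' does not in general descend to ``$\sigma$ has a model in $V$'' once $\sigma$ has $\aleph_1$-many subformulas --- and the paper's own Section \ref{om2bad} (the Laskowski--Shelah construction, Theorem \ref{colom2} and Corollary \ref{nobiglimit}) shows that the exactly analogous statement one cardinal up is \emph{false}: there is an age of size $\aleph_2$ with HP, JEP, and AP but no Fra\"iss\'e limit, i.e.\ a ``consistent'' Scott-like description with no model. So whatever ``transfinite Henkin construction'' you run must exploit something special about the $\aleph_1$ case; merely noting that the back-and-forth approximations live in $V$ does not show the construction can be completed without getting stuck.

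What actually closes this gap in the paper is Lemma \ref{lem: Fraisse aleph 1} together with Corollary \ref{presenting ultrahomogeneous}: one passes to the Morleyization $\Ahat$ (so that the Scott analysis becomes quantifier-free and the structure becomes $\om$-homogeneous), observes that its age is an $\aleph_1$-sized family with HP, JEP, and AP lying in $V$, and then proves --- by an $\om_1$-length increasing chain of \emph{countable} approximations, using a separate amalgamation claim to amalgamate a finite structure into a countable one --- that such an age has a Fra\"iss\'e limit of size $\le\aleph_1$ in $V$, which becomes isomorphic to $\Ahat$ once both are countable. Your proposal needs to supply this construction (or an equivalent consistency-property argument tailored to atomic/homogeneous Scott sentences of size $\aleph_1$); as written, the decisive step is asserted rather than proved, and it is precisely the step where the hypothesis on $\om_2$ earns its keep.
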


(Note again that, even if the theorem claims the existence of $\B\in V$ isomorphic to $\A$, the isomorphism between $\A$ and $\B$ is only required to belong to $W$, which is where $\A$ lives.
The copy $\B$ given by the theorem might not be countable in $V$, but it will be countable in $W$.
See Remark \ref{remark on copies}.)

This theorem yields as a corollary a partial positive answer to Slaman's question. 

\begin{corollary}\label{cor: Slaman's question}
Suppose $\B\in V$ and $\A$ belongs to, and is countable in, an extension $W$ of $V$.
If $\A\legen\B$ for some $\B\in V$ with $\vert\B\vert=\aleph_1$, then $V$ contains a copy of $\mathcal{A}$.  
\end{corollary}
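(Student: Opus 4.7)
The plan is to reduce the corollary to Theorem~\ref{notcollapsingom2}: it suffices to exhibit a forcing notion $\PP\in V$ that does not collapse $\om_2^V$ and which generically presents $\A$. The natural candidate is the L\'evy collapse $\PP=\mathrm{Coll}(\om,\om_1^V)$, whose conditions are finite partial functions from $\om$ into $\om_1^V$. This poset has cardinality $\aleph_1^V$, is Knaster and hence $\om_2^V$-c.c., so it preserves $\om_2^V$; and every $V$-generic $G\subseteq\PP$ makes $\om_1^V$ countable.

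To verify that $\PP$ generically presents $\A$, fix any $V$-generic $G$. In $V[G]$ the ordinal $\om_1^V$ is countable, so since $\vert\B\vert^V=\aleph_1$ the structure $\B$ is countable in $V[G]$; pick any $\om$-copy $\B_0\in V[G]$ of $\B$. The goal is to produce an $\om$-copy of $\A$ inside $V[G]$ itself. To do so, pass to an ambient extension $U$ containing both $V[G]$ and $W$ in which $\A$ (already countable in $W$) and $\B$ are both countable and $\B_0$ is still an $\om$-copy of $\B$. By Lemma~\ref{UNI}, the hypothesis $\A\legen\B$ is witnessed by $U\models\A\lemuch\B$, so there is an $\om$-copy $\A_0$ of $\A$ with $\A_0\leqt\B_0$. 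Since $\A_0$ is then just a subset of $\om$ Turing-computable from $\B_0\in V[G]$, and Turing reducibility is absolute, $\A_0\in V[G]$. Thus $V[G]$ contains an $\om$-copy of $\A$, so $\A$ is generically presented by $\PP$, and Theorem~\ref{notcollapsingom2} delivers a copy of $\A$ in $V$.

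The main technical point is the construction of the common extension $U$ in which we can simultaneously speak of $V[G]$, of $\A\in W$, and of a Muchnik reduction from $\B_0$ to $\A$. A concrete way is to work in a large ambient universe in which $W$ and $V[G]$ both sit and then force over $W$ with $\mathrm{Coll}(\om,\om_1^W)$, arranging by mutual-genericity considerations that $\B_0$ retains its role as an $\om$-copy of $\B$ in the result. Lemma~\ref{UNI} is exactly what makes this legitimate: the truth of $\A\lemuch\B$ is independent of the particular common extension chosen, so the reduction obtained in $U$ really does encode an $\om$-copy of $\A$ computable from $\B_0$, and hence one that lives already in $V[G]$.
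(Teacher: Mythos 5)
Your proof is correct and follows essentially the same route as the paper's (Corollary \ref{computinggen}): force with $\om_1^{<\om}$ to make $\B$ countable while preserving $\om_2$, observe that $\A$ is then generically presentable, and apply Theorem \ref{colom1}; your extra care in passing to a common extension $U$ and pulling the copy $\A_0$ back into $V[G]$ via absoluteness of Turing reducibility fills in exactly the step the paper dismisses with ``a fortiori.'' One small slip: $\mathrm{Coll}(\om,\om_1^V)$ is not Knaster (it is not even c.c.c., since it collapses $\om_1$), but the conclusion you need is still immediate because a poset of cardinality $\aleph_1$ is automatically $\om_2$-c.c.\ and hence preserves $\om_2^V$.
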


We can apply the corollary above to  $L$ and get that if $\B$ lives in $L$ and, within $L$, has size $\aleph_1^L$, then $\A$ has a copy in $L$.

We also give a new proof of the following result of Harrington.

\begin{thm} [Harrington]
If $T$ is a counter-example to Vaught's conjecture, then it has models of arbitrarily high Scott rank below $\om_2$.
\end{thm}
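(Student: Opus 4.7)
The plan is to apply Corollary \ref{cor: Slaman's question} with a carefully chosen $\aleph_1$-sized ground-model structure. Fix $\alpha < \om_2$; the goal is to produce a model of $T$ of power $\aleph_1$ with Scott rank at least $\alpha$. First I would choose $\B \in V$ of cardinality $\aleph_1$ which encodes $T$ together with an ordinal $\beta \in (\alpha, \om_2)$ in a sufficiently rich way---for example, $\B = L_\beta[T]$ for a suitably admissible $\beta$. Then I would let $G$ be $\text{Coll}(\om, \om_1)$-generic over $V$; since this forcing has cardinality $\aleph_1$, it preserves $\om_2^V$, and inside $V[G]$ the structure $\B$ is countable and is an admissible set of ordinal height $\beta$ containing $T$. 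Invoking the classical admissible-set-theoretic analysis of counterexamples to Vaught's conjecture (together with the fact that $T$ remains a counterexample in $V[G]$ by Shoenfield absoluteness of ``has no perfect set of countable models''), one extracts from $\B$ a countable model $\A$ of $T$ with Scott rank at least $\alpha$, in a way that is effective in $\B$; hence $\A \lemuch \B$ holds in $V[G]$, which yields $\A \legen \B$.

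Corollary \ref{cor: Slaman's question} then produces a copy $\A' \in V$ of $\A$. Since $\A$ is countable in the $\text{Coll}(\om, \om_1)$-extension, we have $|\A'|^V \leq \aleph_1^V$. By absoluteness of Scott rank between $V$ and $V[G]$ (both universes satisfy the same $L_{\infty\om}$ sentences about either structure), the Scott rank of $\A'$ equals that of $\A$, which is at least $\alpha \geq \om_1^V$; so $\A'$ cannot be countable in $V$, forcing $|\A'|^V = \aleph_1^V$. Thus $\A'$ is the desired model of $T$ of cardinality $\aleph_1$ with Scott rank at least $\alpha$.

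The main obstacle is the admissible-set-theoretic step: verifying that from $\B$, viewed in $V[G]$ as a countable admissible set containing the counterexample $T$, one can uniformly extract a countable model of $T$ whose Scott rank is at least the ordinal height of $\B$. This is essentially the classical content of Harrington's theorem---recast here through generic presentability rather than through a direct analysis of Vaught counterexamples in $V$---and rests on the model-theoretic work of Makkai and others on Vaught's conjecture inside admissible sets. A secondary point requiring care is absoluteness: one must check that the $\Pi^1_1$ property ``$T$ has no perfect set of countable models'' and the existence of countable models of $T$ of every sufficiently large Scott rank transfer correctly between $V$ and the generic extension.
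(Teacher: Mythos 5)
Your overall architecture is sound --- collapse $\om_1$, produce the high--Scott-rank model in the extension, and pull it back to $V$ via the transfer machinery (your Corollary \ref{cor: Slaman's question} is proved from Theorem \ref{colom1}, so you end up at the same place the paper does). But the load-bearing step is exactly the one you flag and then wave at: the claim that from a countable copy of $\B = L_\beta[T]$ in $V[G]$ one can \emph{uniformly compute} (i.e., Muchnik-reduce) a countable model of $T$ of Scott rank $\geq\alpha$. That assertion is not a citable black box in the form you need --- a genuine reduction $\A\lemuch\B$ holding in the relevant generic extensions --- and it is essentially the entire content of Harrington's classical admissible-set argument. A proof that presupposes it is not a proof of the theorem; it is a reduction of the theorem to its own hardest ingredient. (There is also a secondary soft spot: $\legen$ for an $\A$ living only in $W=V[G]$ requires the reduction in every extension making both structures countable, so you would additionally need the analogue of Lemma \ref{UNI} for structures living in extensions; this is repairable by the same Shoenfield argument, but you should say so.)

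The paper's proof replaces that admissible-set step with a softer, purely forcing-theoretic idea, and this is the real point of divergence. It takes the countable model $\B$ of Scott rank $\geq\alpha$ in $V[G]$ (which exists simply because $T$ is still a counterexample there and $\alpha$ is now countable) and shows $\B$ is \emph{generically presentable} by the collapse of $\om_1$: if some condition forced $\nu[G]\not\cong\nu[H]$ for mutually generic $G,H$, then a forcing adding perfectly many mutually generic filters would produce a perfect set of pairwise non-isomorphic countable models of $T$, contradicting the absoluteness of ``$T$ is a counterexample to Vaught's conjecture.'' Theorem \ref{colom1} then yields the copy of size $\aleph_1$ in $V$ directly, with no effective construction of models from admissible sets anywhere. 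If you want to salvage your route, you must either actually prove your uniform-extraction claim (at which point you have reproved Harrington's theorem the old way) or switch to the generic-presentability argument.
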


On the other hand, making $\om_2$ countable always introduces structures with universe $\om$ that do not have copies in $V$. This provides an exact dichotomy among structures generically presentable, and a negative answer to Slaman's question in general.   

\begin{theorem}\label{thm: omega 2}
There is a structure $\M$ in an extension of $V$ such that $\M$ is generically presentable by any notion of forcing that makes $\omega_2$ countable, but $\M$ has no copy in $V$. Moreover, this $\mathcal{M}$ is generically Muchnik reducible to the ordering $(\omega_2, <)$.
\end{theorem}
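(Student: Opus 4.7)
The plan is to exhibit a structure $\M$ in an extension of $V$ that witnesses the theorem. First, observe that the ``moreover'' clause---the generic Muchnik reducibility $\M \legen (\omega_2,<)$---already implies the main generic presentability statement: in any forcing extension $V[G]$ in which $\omega_2^V$ is countable, a countable copy of $(\omega_2^V,<)$ is available in $V[G]$, and by the reduction this computes a countable copy of $\M$ in $V[G]$. So the construction need only deliver an $\M$ with $\M \legen (\omega_2,<)$ and no copy in $V$.

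For the construction, I would fix a $V$-generic $H$ for the L\'evy collapse $\mathrm{Coll}(\omega,\omega_2^V)$ and set $W = V[H]$. The generic $H$ yields a canonical enumeration $e_H\colon \omega \to \omega_2^V$ inside $W$. Define $\M \in W$ to be a structure on $\omega$ obtained uniformly from $e_H$: pull back the well-ordering of $\omega_2^V$ along $e_H$, and then adjoin canonical further structure---invariants definable from any enumeration of $\omega_2^V$ as a countable ordinal---chosen so that (a) the isomorphism type of $\M$ depends only on $\omega_2^V$ viewed as a countable ordinal and not on the particular enumeration, while (b) realizing this isomorphism type tacitly requires a countable enumeration of $\omega_2^V$.

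From this construction, $\M \legen (\omega_2,<)$ is immediate: in any extension in which $(\omega_2^V,<)$ is countable, any enumeration $e$ of $\omega_2^V$ (readable from a countable copy of $(\omega_2^V,<)$) yields a structure $\M_e$ by the same procedure. To see $\M_e \cong \M$ across different enumerations---and hence that the procedure really defines $\M$ up to isomorphism---I would pass to a common further extension $V[G][H']$ containing both enumerations and use the homogeneity of the L\'evy collapse: any two $V$-generic enumerations of $\omega_2^V$ are related by an automorphism of the forcing, which induces an isomorphism $\M_e \cong \M$ in the larger model. Since isomorphism of countable structures is absolute between transitive models, the isomorphism descends to $V[G]$, giving the required copy.

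The core difficulty is to verify that $\M$ has no copy in $V$. Suppose for contradiction that some $\N \in V$ were isomorphic to $\M$ in $W$; then $|\N|^V \leq \aleph_2^V$, and the isomorphism would transfer the canonical invariants on $\M$ back to $V$-definable data on $\N$. The construction must be designed so that this transfer contradicts the uncountability of $\omega_2^V$ in $V$---for instance, by arranging that the invariants on $\M$ encode an explicit countable enumeration of $\omega_2^V$, which $V$ cannot exhibit. Balancing these two opposing constraints---enough rigidity to rule out any $V$-copy, yet enough canonicity for the forcing-invariance in the previous step---is the technical heart of the proof, and is what the rest of the argument must carefully negotiate.
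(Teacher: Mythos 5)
Your proposal correctly reduces the presentability claim to the ``moreover'' clause and correctly identifies where the difficulty lies, but it does not actually contain the construction: the ``canonical further structure'' to be adjoined to the pulled-back copy of $\om_2^V$ is left unspecified, and that is precisely the content of the theorem. Worse, the mechanism you sketch for ruling out a copy in $V$ --- arranging that the invariants ``encode an explicit countable enumeration of $\om_2^V$, which $V$ cannot exhibit'' --- cannot work as stated. By the paper's convention (Remark \ref{remark on copies}), a copy of $\M$ in $V$ is only required to be isomorphic to $\M$ in the extension $W$ where $\M$ lives; it is \emph{not} required to be countable in $V$. In particular the bare pulled-back well-order already has a copy in $V$, namely $(\om_2,<)$ itself, of size $\aleph_2$. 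So an obstruction based on $V$ lacking a countable enumeration of $\om_2^V$ rules out nothing: a candidate copy $\N\in V$ of size $\aleph_2^V$ becomes countable in $W$ and could perfectly well acquire the required enumeration there. You need an isomorphism-invariant property of $\M$ that no $V$-structure of \emph{any} cardinality can have once collapsed.

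This is exactly what the paper's proof supplies via the Laskowski--Shelah theory $T$: there is a countable model $\M_0$ with a totally indiscernible sort $\Vtt^{\M_0}$ such that $\M_0$ is atomic over $\Vtt^{\M_0}$, while $T(A)$ has \emph{no} atomic model whenever $\vert A\vert\geq\aleph_2$. The structure $\M_{\om_2}$ glues a copy of $\om_2$ to $\M_0$ by a bijection onto $\Vtt^{\M_0}$ (well-defined by indiscernibility --- this plays the role of your homogeneity argument). Any copy $\N\in V$ would force the $\Vtt$-sort to have size $\aleph_2$ in $V$, yet being atomic over the $\Vtt$-sort is absolute, contradicting the nonexistence of atomic models of $T(A)$ for $\vert A\vert=\aleph_2$. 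That cardinality-sensitive, absolute model-theoretic obstruction is the missing idea; without it (or something of comparable strength) your outline does not close.
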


%
%
%
%

We close with a structural approach to the question: what properties ensure that generic presentability implies existence in the ground model?  We show that this occurs at least when the structures involved are as ``set-like" as possible. In Section \ref{presentrigid}, we show the following.

\begin{theorem} 
Suppose $\A$ is rigid and is generically presentable. Then there is an isomorphic copy of $\A$ already in $V$.
\end{theorem}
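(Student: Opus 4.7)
The plan is to use rigidity to extract a canonical Scott sentence for $\A$ that lies in the ground model $V$, and then to find a countable model of this sentence in $V$.

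First I would fix a $\PP$-name $\dot{\B}$ such that $\Vdash_\PP$ ``$\dot{\B}$ is a copy of $\A$ on $\omega$,'' with the isomorphism to $\A$ living in $W[G]$. Since rigidity of a countable structure is $\Pi^1_1$ in a real code, rigidity is absolute between transitive ZFC models, so every $\dot{\B}^G$ is rigid in its own universe.

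The central step is to argue that the canonical Scott sentence of $\dot{\B}^G$ does not depend on $G$, and hence lies in $V$. I would consider two mutually $V$-generic filters $G_1, G_2 \subseteq \PP$. In $V[G_1][G_2]$ both copies $\dot{\B}^{G_1}$ and $\dot{\B}^{G_2}$ are countable and isomorphic to $\A$ in $W[G_1][G_2]$, hence isomorphic to each other. Since isomorphism of countable structures is $\Sigma^1_1$ and absolute, the isomorphism already exists in $V[G_1][G_2]$, and by rigidity it is unique. Consequently the canonical Scott sentences $\sigma_{G_1}, \sigma_{G_2}$ computed respectively in $V[G_1]$ and $V[G_2]$ agree when viewed in $V[G_1][G_2]$ (they are iso-invariants). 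A standard two-generics absoluteness argument then places the common value $\sigma$ in $V[G_1] \cap V[G_2] = V$.

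Given $\sigma \in V$, I would produce the copy in $V$ as follows. The sentence $\sigma$ is satisfiable by a countable model, witnessed by $\dot{\B}^G$ in each $V[G]$, and ``$\sigma$ has a countable model'' is an absolute $\Sigma^1_1$-style assertion. Therefore $\sigma$ has a countable model $\mathcal{C} \in V$ (applying downward L\"owenheim--Skolem when $\sigma \in L_{\omega_1,\omega}^V$, or the absoluteness of satisfiability directly otherwise). By the defining property of the Scott sentence together with rigidity, any two countable models of $\sigma$ are isomorphic in a common extension, so $\mathcal{C} \cong \dot{\B}^G \cong \A$ in $W[G]$, exhibiting $\mathcal{C}$ as the desired copy.

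The main obstacle will be verifying that the canonical Scott sentence genuinely lies in $V$, i.e., that both the Scott rank and the tree-of-types computation are absolute across the relevant extensions. Rigidity is essential here: it trivializes the automorphism orbit structure, so the Scott analysis depends only on the isomorphism type and therefore commutes with the mutually generic intersection. Extra care will likely be needed if $\PP$ collapses $\omega_1$, in which case one may have to work with a coded or localized version of the Scott sentence; without rigidity the whole argument breaks down, matching the counterexamples to Slaman's question produced earlier in the paper.
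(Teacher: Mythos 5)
Your first step---showing that the canonical Scott sentence $\sigma$ of $\A$ lies in $V$---is essentially sound (it is close to Lemma \ref{presenting Scott analysis} of the paper, with the usual care about coding infinite conjunctions as sets rather than sequences), but note that it does not use rigidity at all: the Scott analysis of a countable structure is always an isomorphism invariant and always absolute. The genuine gap is in the next step, ``therefore $\sigma$ has a countable model in $V$.'' If $\PP$ collapses $\omega_2^V$, the Scott sentence of $\dot{\B}^G$, though countable in $V[G]$, may have rank and cardinality $\geq\aleph_2$ as a set in $V$; it is then not an $L_{\omega_1\omega}$-sentence of $V$ and is not coded by any real, so downward L\"owenheim--Skolem does not apply and ``$\sigma$ has a countable model'' is not a $\Sigma^1_1$ assertion to which Shoenfield absoluteness could be applied. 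Satisfiability of uncountable $\L_{\infty\om}$-sentences is genuinely non-absolute: a model of the canonical Scott sentence in $V$ is precisely a structure $\equiv_{\infty\om}$-equivalent to $\A$ in $V$, and Theorem \ref{colom2} (the Laskowski--Shelah structure $\M_{\om_2}$) shows that such a structure can fail to exist even though $\A$ is generically presentable. Since your argument never uses rigidity in a load-bearing way---the places where you invoke it (invariance of the Scott analysis under isomorphism, uniqueness of countable models of $\sigma$ up to isomorphism in a common extension) hold for arbitrary structures---it would apply verbatim to $\M_{\om_2}$ and prove a statement the paper refutes.

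What is missing is a mechanism that converts the set-sized invariant living in $V$ back into an actual structure in $V$, and this is exactly where the paper spends the rigidity hypothesis. Its proof bypasses Scott sentences entirely: it builds the copy directly as a quotient of $\om\times\PP$ by the relation $(a,p)\equiv(b,q)$ iff $(p,q)$ forces $\{(a,b)\}$ to extend to an isomorphism of the two generic copies, and rigidity enters in the Simultaneity lemma (Lemma \ref{Sim}), which glues finitely many such forced partial isomorphisms into a single one by observing that the composite of two isomorphisms between generic copies is an automorphism and hence the identity. Without an argument of that kind---some essential use of rigidity to recover a concrete presentation---your outline cannot be completed.
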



\section{Generic reducibility}  \label{sec: gen red}   


\subsection{Basic properties}\label{Basic} The key result for analyzing generic presentability and generic reducibility is the Shoenfield Absoluteness Theorem (see \cite{Jec03}).  The version we state below is slightly weaker than the actual theorem, but it is all we will need here:

\begin{thm}[Shoenfield] \label{Shoenfield} Suppose $\varphi$ is a $\Pi^1_2$ sentence, with real parameters.
Then, for every forcing extension $W$ of $V$, $V\models \varphi\iff W\models \varphi$.
\end{thm}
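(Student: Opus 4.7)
The plan is to prove $\Pi^1_2$ absoluteness via the standard Shoenfield tree construction. First, I would reduce to the $\Sigma^1_2$ case by contraposition: $\varphi$ is $\Pi^1_2$ iff $\neg\varphi$ is $\Sigma^1_2$, so it suffices to show $\Sigma^1_2$ statements are both upward and downward absolute under forcing. Write $\neg\varphi$ as $\exists y \in \omega^\omega\, \forall z \in \omega^\omega\, \theta(y,z,r)$, where $r$ denotes the real parameters and $\theta$ is arithmetical. Standard manipulations let us assume that $\forall z\, \theta(y,z,r)$ is equivalent to the well-foundedness of an associated recursive-in-$(y,r)$ tree $S_{y,r} \subseteq \omega^{<\omega}$.

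Next, I would replace well-foundedness of $S_{y,r}$ by the existence of an order-preserving map $\rho: S_{y,r} \to \omega_1$ from (the Kleene--Brouwer linearization of) $S_{y,r}$ into the ordinals; this is the point at which $\omega_1$ enters the picture, since a well-founded tree on $\omega$ admits such a ranking below $\omega_1$. I would then assemble all the pieces into the \emph{Shoenfield tree} $T \in V$, a tree on $\omega \times \omega_1^V$ whose nodes at level $n$ encode pairs $\bigl(y \upharpoonright n,\, \rho \upharpoonright n\bigr)$ with $\rho$ a putative ordinal ranking. The point of the construction is that $T$ is ill-founded in any model $M \supseteq V$ containing $\omega_1^V$ iff $M$ satisfies $\exists y\,\forall z\,\theta(y,z,r)$, i.e.\ iff $M \models \neg\varphi$.

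The crux of the argument is absoluteness of well-foundedness for trees living in $V$. If $T$ has an infinite branch in $V$, that branch persists in $W \supseteq V$, so $T$ remains ill-founded in $W$. Conversely, if $T$ is well-founded in $V$, then it carries a rank function $\mathrm{rk}: T \to \mathrm{On}^V$ inside $V$; since forcing adds no new ordinals, $\mathrm{On}^V = \mathrm{On}^W$, and this same $\mathrm{rk}$ continues to certify well-foundedness of $T$ in $W$. Combining with the previous paragraph, $V \models \varphi$ iff $T$ is well-founded in $V$ iff $T$ is well-founded in $W$ iff $W \models \varphi$.

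The main delicate point is to arrange the Shoenfield tree so that membership in $T$ depends only on ordinals already present in $V$, so that $T$ itself (and any ranking of it) transfers verbatim to $W$; a priori one might worry that $W$ has new ``ranking candidates'' $\rho$ using larger ordinals, but since all ordinals of $W$ are ordinals of $V$ this is a non-issue. Once this bookkeeping is in place, the theorem reduces to the elementary observation that forcing extensions preserve the class of ordinals and that rank functions into the ordinals certify well-foundedness absolutely.
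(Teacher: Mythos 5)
The paper does not actually prove this statement --- it quotes Shoenfield's theorem from Jech \cite{Jec03} --- so the only question is whether your rendition of the standard Shoenfield-tree argument is sound. It has one genuine gap, located exactly at the ``delicate point'' you single out in your last paragraph and then wave away. You build $T$ in $V$ on $\omega\times\omega_1^V$, so a branch of $T$ is a pair $(y,\rho)$ with $\rho$ an order-embedding of the Kleene--Brouwer order of $S_{y,r}$ into $\omega_1^V$. The direction ``$T$ ill-founded in $M$ $\Rightarrow$ $M\models\neg\varphi$'' is fine, but the converse --- which is what the hard, downward half of your final chain of equivalences needs --- is not: if the forcing collapses $\omega_1^V$, a witness $y\in W$ to $\exists y\,\forall z\,\theta$ yields a tree $S_{y,r}$ that is well-founded in $W$ of some rank below $\omega_1^W$, and that rank may well be $\ge\omega_1^V$, in which case $(y,\rho)$ is \emph{not} a branch of your $T$ for any $\rho$. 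So the step ``$W\models\neg\varphi$ $\Rightarrow$ $T$ is ill-founded in $W$'' is unjustified. (The implication is in fact true, but only because Shoenfield absoluteness itself lets one route the witness back through $V$; as a step inside the proof it is circular.) The worry you dismiss --- that $W$ has new ranking candidates using larger ordinals --- is exactly backwards: those larger rankings are not a threat to be excluded but are needed to catch the new witnesses, and truncating $T$ at $\omega_1^V$ discards them.

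The repair is standard and cheap: build $T$ on $\omega\times\kappa$ for an ordinal $\kappa$ of $V$ with $\kappa\ge\omega_1^W$ (such a $\kappa$ lies in $V$ precisely because forcing adds no ordinals --- this is where your observation that $\mathrm{On}^V=\mathrm{On}^W$ should actually be deployed), or equivalently invoke the inner-model form of the theorem with $V$ regarded as an inner model of $W$ containing $\omega_1^W$. With that change the rest of your argument --- branches persist upward, rank functions persist upward, hence well-foundedness of the fixed tree $T\in V$ is absolute between $V$ and $W$ --- is correct. Note that the collapsing case is not a corner case for this paper: Lemma \ref{UNI} and Theorem \ref{colom1} apply Shoenfield absoluteness across Levy collapses of $\omega_1$, so your argument as written would not cover the uses actually made of the theorem here.
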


An easy fact about (countable) Muchnik reducibility of structures is the following.

\begin{observation}\label{obsv} 
Basic facts about $\lemuch$ are invariant under forcing. Specifically, we have the following.  
\begin{enumerate}
 \item The relation ``$\lemuch$" is $\Pi^1_2$.
\item For countable $\mathcal{A}$, the predicate ``$\ge_w\mathcal{A}$" is $\Delta^1_1$ in a Scott sentence of $\mathcal{A}$.
\end{enumerate}
\end{observation}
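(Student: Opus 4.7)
The proof of both parts is a direct complexity calculation from the definitions; in each case, the main ingredient is identifying the logical form of the statement ``$X$ codes a copy of the structure.''

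For (1), I would unpack $\A \lemuch \B$ as follows: for every real $X$ coding (on $\om$) an atomic diagram isomorphic to $\B$, there exists $e$ such that $\Phi_e^X$ is total and codes a copy of $\A$. The predicate ``$X$ codes a copy of $\B$'' asserts the existence of an isomorphism from the $\om$-structure coded by $X$ to $\B$, hence is $\Sigma^1_1$ in $X,\B$; similarly ``$\Phi_e^X$ codes a copy of $\A$'' is $\Sigma^1_1$ in $X, \A, e$, while totality of $\Phi_e^X$ is $\Pi^0_2$. The body of the definition is thus a $\Sigma^1_1 \to \Sigma^1_1$ implication, equivalent to $\Pi^1_1 \lor \Sigma^1_1$; placing it under the universal real quantifier $\forall X$ yields a $\Pi^1_2$ formula, establishing (1).

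For (2), the improvement to $\Delta^1_1$ comes from replacing ``$Z$ codes a copy of $\A$'' by ``$Z \models \sigma_\A$''. The key technical input is that satisfaction of a fixed $\Linf$-sentence is $\Delta^1_1$ in a real coding that sentence, proved by induction along the formula's construction tree using that $\Delta^1_1$ is closed under number quantification and under uniformly coded countable Boolean operations. Applying this to $\sigma_\A$, the assertion ``$\Phi_e^Y \models \sigma_\A$'' is $\Delta^1_1(Y,\sigma_\A,e)$; combined with the $\Pi^0_2$ statement that $\Phi_e^Y$ is total, and using that the number quantifier $\exists e$ preserves the $\Delta^1_1$ level of the analytic hierarchy, the predicate
\[
Y \ge_w \A \iff \exists e\,\bigl(\Phi_e^Y \text{ total} \land \Phi_e^Y \models \sigma_\A\bigr)
\]
is $\Delta^1_1$ in $Y$ with parameter $\sigma_\A$.

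The only delicate step is the $\Delta^1_1$-analysis of satisfaction for a fixed infinitary sentence; once a coding convention for $\Linf$-formulas has been fixed this is standard, and it is what forces a Scott sentence (rather than $\A$ itself) to appear in the parameter. These bounds are precisely what will allow us to invoke Shoenfield-style absoluteness in the sequel, so no further machinery is needed for the observation itself.
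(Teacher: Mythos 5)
Your proposal is correct, and it supplies exactly the standard complexity computation that the paper treats as routine (the observation is stated without proof there): $\Sigma^1_1$ bounds on ``codes a copy of,'' a universal real quantifier for part (1), and the $\Delta^1_1$-ness of satisfaction of a fixed coded $L_{\omega_1\omega}$ sentence---together with closure of $\Delta^1_1$ under number quantification---for part (2). Your reading of ``$\ge_w\mathcal{A}$'' as a predicate of reals $Y$ (``$Y$ computes a model of $\sigma_\mathcal{A}$'') is the one consistent with how the observation is later applied, e.g.\ in the $\om_1$ example.
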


Together with Theorem \ref{Shoenfield}, this implies that much of the theory of $\legen$ is absolute. In particular, we have the next lemma.

\begin{lemma}\label{UNI} Fix arbitrary structures $\mathcal{M}, \mathcal{N}$ in $V$. If there is some generic extension in which $\mathcal{M}$ and $\mathcal{N}$ are countable and $\mathcal{M}\lemuch\mathcal{N}$, then $\mathcal{M}\le_w^*\mathcal{N}$.
\end{lemma}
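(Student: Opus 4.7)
The plan is to combine Observation~\ref{obsv}(1), which expresses $\lemuch$ as a $\Pi^1_2$ relation, with two applications of Shoenfield absoluteness (Theorem~\ref{Shoenfield}) routed through a common generic extension. Let $V[G_0]$ be the extension provided by the hypothesis, with $G_0$ being $\PP$-generic over $V$ and $V[G_0] \models \M \lemuch \N$; let $V[H]$ be an arbitrary generic extension in which $\M, \N$ are countable, with $H$ being $\QQ$-generic over $V$. The goal is to show $V[H] \models \M \lemuch \N$.

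The obstruction to a one-shot absoluteness argument is that the hypothesis only supplies $\M \lemuch \N$ in \emph{one} particular extension $V[G_0]$, whereas $V[H]$ need not be comparable with $V[G_0]$. The first step is therefore to localize: pick a condition $p \in G_0$ with $p \Vdash_\PP \M \lemuch \N$, and let $\PP_p = \{q \in \PP : q \le p\}$. Every $\PP_p$-generic filter automatically contains $p$, so every $\PP_p$-generic extension of $V$ satisfies $\M \lemuch \N$. Now, working in $V[H]$, force with $\PP_p$ to obtain a $\PP_p$-generic $K$ over $V[H]$. The standard mutual-genericity criterion (if $K$ is $\PP_p$-generic over $V[H]$ and $H$ is $\QQ$-generic over $V$, then $(K,H)$ is $(\PP_p \times \QQ)$-generic over $V$) yields that $K$ is $\PP_p$-generic over $V$, $H$ is $\QQ$-generic over $V[K]$, $V[K] \models \M \lemuch \N$, and $V[K][H] = V[H][K]$.

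Now Shoenfield absoluteness is applied twice. First, $V[K][H]$ is a generic extension of $V[K]$, and $\M \lemuch \N$ is a $\Pi^1_2$ statement with real parameters coding presentations of $\M, \N$ in $V[K]$ (equivalently, with Scott sentences as parameters via Observation~\ref{obsv}(2)); absoluteness transfers the statement up to $V[K][H]$. Second, the same model $V[H][K]$ is a generic extension of $V[H]$, and in $V[H]$ the structures $\M, \N$ also admit real presentations. Since Muchnik reducibility does not depend on the choice of real presentation of $\M$ or $\N$, the $\Pi^1_2$ statement with parameters in $V[H]$ continues to hold in $V[H][K]$, and downward Shoenfield absoluteness delivers $V[H] \models \M \lemuch \N$, as required.
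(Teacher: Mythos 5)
Your proof is correct and is essentially the paper's argument: both proofs amalgamate the two generic extensions via a product forcing (using the product lemma for mutual genericity) and transfer the $\Pi^1_2$ statement ``$\M\lemuch\N$'' by Shoenfield absoluteness, invoking isomorphism-invariance of $\lemuch$ to pass between the presentations coming from the two factors. The only difference is cosmetic: the paper argues by contradiction, pushing the positive statement from one factor and the negative ($\Sigma^1_2$) statement from the other up into the common extension $V[H_0][H_1]$, whereas you argue directly, pushing the positive statement up from $V[K]$ and then down into $V[H]$.
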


\begin{proof}
Suppose otherwise. Then there must exist posets $\PP _0$ and $\PP _1$ in $V$ such that forcing with either collapses both $\mathcal{M}$ and $\mathcal{N}$, 
 \[
 \Vdash_{\PP _0}\mathcal{M}\lemuch\mathcal{N} 
      \quad \mbox{ and } \quad 
 \Vdash_{\PP _1}\mathcal{M}\not\lemuch\mathcal{N}.
 \] 
 Let $G=H_0\times H_1$ be $\PP _0\times\PP _1$-generic over $V$.
 Let $\M_0$ and $\N_0$ be reals in $V[H_0]$ coding copies of $\M$ and $\N$ with domain $\om$, and let $\M_1$ and $\N_1$ be reals in $V[H_1]$ coding copies of $\M$ and $\N$ with domain $\om$.
 Then, in $V[H_0]$, $\M_0\lemuch \N_0$, while in $V[H_1]$, $\M_1\not\lemuch \N_1$.
By Shoenfield's absoluteness, this is still true in $V[H_0][H_1]$.
This gives us a contradiction because, in $V[H_0][H_1]$,  $\M_0$ is isomorphic to $\M_1$ and $\N_0$ to $\N_1$. 
\end{proof}

As an immediate corollary of Lemma \ref{UNI}, we get the following.  

\begin{corollary} \label{for: much vs gen}
For structures $\mathcal{A}, \mathcal{B}$ countable in $V$, we have $\mathcal{A}\lemuch\mathcal{B}$ if and only if $\mathcal{A}\legen\mathcal{B}$.
\end{corollary}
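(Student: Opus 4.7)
The plan is to derive the corollary directly from Lemma \ref{UNI}, using the observation that if $\A$ and $\B$ are countable in $V$, then they remain countable in every generic extension of $V$, including in $V$ itself viewed as its own (trivial) generic extension.

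For the $(\Rightarrow)$ direction, assuming $\A \lemuch \B$ holds in $V$, I would invoke Lemma \ref{UNI} with $V$ itself playing the role of the generic extension. Since $\A$ and $\B$ are countable in $V$ and $\A \lemuch \B$ holds there, the hypothesis of the lemma is satisfied, and we conclude $\A \legen \B$.

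For the $(\Leftarrow)$ direction, assuming $\A \legen \B$, I would simply unfold the definition of generic Muchnik reducibility. The trivial forcing extension $V[G] = V$ is one in which $\A$ and $\B$ are countable, so by definition $V \models \A \lemuch \B$.

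The only potential obstacle is a matter of convention: whether one wishes to treat $V$ itself as a ``generic extension'' of $V$ (via trivial forcing). If one prefers to avoid this, both directions can be argued using Shoenfield absoluteness (Theorem \ref{Shoenfield}) together with Observation \ref{obsv}(1): for the forward direction, pick any nontrivial forcing $\PP$ and note that the $\Pi^1_2$ assertion $\A \lemuch \B$ passes to every $V[G]$, in each of which $\A, \B$ remain countable; for the backward direction, take any $\PP$-generic $G$, observe that by hypothesis $V[G] \models \A \lemuch \B$, and apply Shoenfield absoluteness to pull the $\Pi^1_2$ statement back down to $V$.
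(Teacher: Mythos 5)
Your proposal is correct and matches the paper's intent: the paper derives this corollary immediately from Lemma \ref{UNI} (for the forward direction) together with the definition of $\legen$ and Shoenfield absoluteness of the $\Pi^1_2$ relation $\lemuch$ (for the backward direction), exactly as you do. Your remark about whether $V$ counts as a trivial generic extension, and the fallback via Observation \ref{obsv}(1) and Theorem \ref{Shoenfield}, handles the only delicate point correctly.
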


\subsection{Potential isomorphism}
Generic Muchnik reducibility also has strong connections with infinitary logic.

\begin{definition}  Let $\mathcal{L}$ be a language; that is, a set of relation and operation symbols.  

\begin{itemize}

\item  $\mathcal{L}_{\infty\om}$ is the collection of formulas obtained from the atomic $\mathcal{L}$-formulas by closing under arbitrary set-sized Boolean combinations and single instances of quantification. See \cite{Kei71} for a treatment of the basic properties of $\mathcal{L}_{\infty\om}$.

\item  For structures $\A, \B$ of arbitrary cardinality, we say that $\A$ is {\em $\L_{\infty\om}$-elementary equivalent} to $\B$, and we write $\A\equiv_{\infty\om}\B$, if the structures satisfy the same $\L_{\infty\om}$ sentences. 
\end{itemize}
\end{definition}

There is a structural characterization of $\equiv_{\infty\om}$, due to Carol Karp:

\begin{definition}\label{def: bf system} Suppose $I$ is a set of partial maps. 
We say that $I$ has the {\em back-and-forth property} if $\la\emptyset,\emptyset\ra\in I$ and for every $\la\abar,\bbar\ra\in I$, 
\begin{enumerate}
\item $\abar$ and $\bbar$ satisfy the same atomic formulas,
\item  for every $c\in\A$, there is $d\in\B$ such that $(\abar c, \bbar d)\in I$, and 
\item  for every $d\in\B$, there is $c\in\A$ such that $(\abar c, \bbar d)\in I$.
\end{enumerate}
An $I$ with the back-and-forth property is called a {\em back-and-forth system} between $\mathcal{A}$ and $\mathcal{B}$.
\end{definition}

\begin{thm}[\cite{Karp65}] $\mathcal{A}\equiv_{\infty\om}\mathcal{B}$ iff there is a back-and-forth system between $\mathcal{A}$ and $\mathcal{B}$.
\end{thm}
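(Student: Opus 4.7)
The plan is to prove the two directions separately, using a proof of the forward implication by induction on formula complexity, and a proof of the reverse implication via the canonical $\L_{\infty\om}$-type back-and-forth system.

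\textbf{Sufficiency of a back-and-forth system.} Suppose $I$ is a back-and-forth system between $\A$ and $\B$. I would show, by induction on the complexity of an $\L_{\infty\om}$-formula $\varphi(\bar{x})$, that for every $\la\abar,\bbar\ra\in I$, one has $\A\models\varphi(\abar)$ if and only if $\B\models\varphi(\bbar)$. The atomic case is clause (1) of Definition \ref{def: bf system}; Boolean combinations (even set-sized) pass through the biconditional trivially; and the quantifier case is exactly where clauses (2) and (3) are used: given $\A\models\exists y\,\varphi(\abar,y)$ witnessed by $c$, clause (2) provides $d\in\B$ with $\la\abar c,\bbar d\ra\in I$, so by induction $\B\models\varphi(\bbar,d)$, and similarly in the other direction via clause (3). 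Applying this to sentences (with $\la\emptyset,\emptyset\ra\in I$) yields $\A\equiv_{\infty\om}\B$.

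\textbf{Necessity of a back-and-forth system.} Conversely, assume $\A\equiv_{\infty\om}\B$. Define
\[ I \;=\; \{\la\abar,\bbar\ra : \abar\in\A^{<\om},\ \bbar\in\B^{<\om},\ |\abar|=|\bbar|,\ \text{and for every $\varphi(\bar{x})\in\L_{\infty\om}$, }\A\models\varphi(\abar)\iff\B\models\varphi(\bbar)\}. \]
The pair $\la\emptyset,\emptyset\ra$ lies in $I$ by hypothesis, and clause (1) of the back-and-forth property is immediate since atomic formulas are $\L_{\infty\om}$-formulas. The crux is clause (2) (clause (3) is symmetric). Fix $\la\abar,\bbar\ra\in I$ and $c\in\A$, and suppose toward a contradiction that no $d\in\B$ satisfies $\la\abar c,\bbar d\ra\in I$. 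Then for each $d\in\B$ there is an $\L_{\infty\om}$-formula $\varphi_d(\bar{x},y)$ with $\A\models\varphi_d(\abar,c)$ but $\B\not\models\varphi_d(\bbar,d)$ (possibly after negating). The set-sized conjunction and subsequent existential quantification
\[ \psi(\bar{x}) \;=\; \exists y\,\bigwedge_{d\in\B}\varphi_d(\bar{x},y) \]
is an $\L_{\infty\om}$-formula, since $\L_{\infty\om}$ permits arbitrary set-sized Boolean combinations. The witness $c$ shows $\A\models\psi(\abar)$, whereas no $d\in\B$ can witness $\psi(\bbar)$ because each such $d$ fails $\varphi_d(\bbar,d)$; hence $\B\not\models\psi(\bbar)$. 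This contradicts $\la\abar,\bbar\ra\in I$.

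\textbf{Expected obstacles.} The argument is essentially bookkeeping, so there is no serious conceptual obstacle; the only subtle point is the use of arbitrary set-sized conjunctions, which is legitimate precisely because $\L_{\infty\om}$ (as defined in the excerpt) allows set-sized Boolean combinations. One should remark that the definition of $I$ makes sense in $V$ regardless of the cardinalities of $\A$ and $\B$, so Karp's theorem as stated holds for structures of arbitrary size. The result will be used later in the paper to translate questions about $\equiv_{\infty\om}$ into combinatorial statements about back-and-forth relations, where forcing and absoluteness arguments apply.
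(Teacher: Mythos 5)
Your proof is correct and is the standard argument for Karp's theorem; the paper itself gives no proof, simply citing \cite{Karp65}, so there is nothing to compare against beyond noting that your two directions (induction on formula complexity for sufficiency, and the canonical system of $\L_{\infty\om}$-equivalent tuples for necessity) are exactly the classical route. The only point worth flagging is that in the necessity direction the formulas $\varphi_d$ are being selected from a proper class, so one should pick each $\varphi_d$ of minimal set-theoretic rank (or otherwise bound the ranks via replacement) before forming the set-sized conjunction $\bigwedge_{d\in\B}\varphi_d$; this is routine and does not affect the argument.
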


It is then not hard to see that for $\A$ and $\B$ countable, we have that $\A\equiv_{\infty\om}\B$ if and only if $\A\isom\B$.
Since Karp's characterization shows that $\equiv_{\infty\om}$ is absolute, we get that, for uncountable structures, $\A\equiv_{\infty\om}\B$ iff they are isomorphic when they are made countable:

\begin{lemma}[Essentially Barwise \cite{Bar73}]\label{potential isomorphism}
The following are equivalent:
\begin{enumerate}
\item $\A\equiv_{\infty\om}\B$,
\item on every generic extension where $\A$ and $\B$ are countable, $\A\isom \B$,
\item on some generic extension where $\A$ and $\B$ are countable, $\A\isom \B$.
\end{enumerate}
\end{lemma}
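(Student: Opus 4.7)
The plan is to route all three equivalences through Karp's characterization of $\equivin$ in terms of back-and-forth systems: a back-and-forth system is a concrete set that can be transported between models, whereas $\equivin$ is an a priori formula-level statement that requires care because the class of $\L_{\infty\om}$-formulas depends on the ambient universe.

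For $(1)\Rightarrow(2)$, I would apply Karp's theorem inside $V$ to produce a back-and-forth system $I\subseteq \A^{<\om}\times\B^{<\om}$ living in $V$. Since $\A$ and $\B$ themselves are unchanged in any outer model, the three clauses of Definition \ref{def: bf system} continue to hold of $I$ in any generic extension $V[G]$: atomic satisfaction is absolute, and the back and forth clauses quantify only over elements of $\A$ and $\B$. In a $V[G]$ in which both structures are countable, I would fix enumerations of $\A$ and $\B$ and run the usual back-and-forth construction starting from $\langle\emptyset,\emptyset\rangle\in I$, at each stage handling the least unhandled element and extending via clause (2) or (3) while staying inside $I$; the resulting union of partial maps is an isomorphism in $V[G]$. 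The implication $(2)\Rightarrow(3)$ is immediate, since forcing with $\mathrm{Col}(\om,\kappa)$ for any $\kappa\ge\max(|\A|,|\B|)$ makes both structures countable.

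The delicate step is $(3)\Rightarrow(1)$. Suppose $V[G]\models \A\isom\B$ witnessed by some $f\in V[G]$; then the family of finite-tuple restrictions of $f$ is visibly a back-and-forth system in $V[G]$, so Karp's theorem applied inside $V[G]$ yields $V[G]\models \A\equivin\B$. Now for any $\varphi\in\L_{\infty\om}\cap V$, the truth value of $\varphi$ in $\A$ is computed by a transfinite recursion on the well-founded formula tree of $\varphi$, and each step of that recursion is absolute between $V$ and $V[G]$: atomic satisfaction is absolute, Boolean operations on truth values are absolute, and quantification ranges over the fixed domain of $\A$. Hence $V\models(\A\models\varphi)\Leftrightarrow V[G]\models(\A\models\varphi)$, and likewise for $\B$, so the $V[G]$-level equivalence $\A\equivin\B$ descends to $V$-level equivalence on every $V$-formula $\varphi$. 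The pitfall I expect to have to watch out for is the tempting but invalid shortcut of trying to pull the back-and-forth system itself from $V[G]$ down to $V$ (the witnessing $f$ need not be in $V$); the correct move is to apply Karp's theorem strictly inside $V[G]$ and then invoke absoluteness of $\L_{\infty\om}$-satisfaction one formula at a time.
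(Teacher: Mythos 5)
Your proof is correct and follows essentially the route the paper sketches: Karp's back-and-forth characterization plus absoluteness, together with the fact that $\equivin$ coincides with isomorphism for countable structures. Your handling of the delicate direction $(3)\Rightarrow(1)$ via formula-by-formula absoluteness of $\L_{\infty\om}$-satisfaction (rather than trying to pull a back-and-forth system down from $V[G]$) is exactly the right way to make precise the paper's one-line appeal to "Karp's characterization shows that $\equivin$ is absolute."
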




\bigskip

As an immediate corollary, we have the following.

\begin{cor} $\A\equiv_{\infty\omega}\B$ implies $\A\equivgen\B$.
\end{cor}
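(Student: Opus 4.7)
The plan is to combine Lemma \ref{potential isomorphism} with the essentially trivial fact that isomorphic countable structures are Muchnik equivalent. First, by the definition of $\equivgen$, it suffices to show that in every generic extension $V[G]$ in which both $\A$ and $\B$ are countable, $V[G]\models \A\equivmuch \B$. Such extensions exist (e.g., apply a Levy collapse to a sufficiently large cardinal), and by Lemma \ref{potential isomorphism} the hypothesis $\A\equiv_{\infty\om}\B$ gives $V[G]\models \A\isom\B$ in each of them.

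Once we have an isomorphism $f\colon \A\to\B$ in $V[G]$ between $\om$-copies of the two structures, we must observe that $f$ witnesses mutual Muchnik reducibility. Given any $\om$-copy $\A_0\in V[G]$ of $\A$, the structure $\B_0$ obtained by transferring the atomic diagram along $f$ is an $\om$-copy of $\B$ computable from $\A_0\oplus f$; but since the copy of $\A$ used to Muchnik-compute $\B$ is allowed to vary, and every copy of $\A$ in $V[G]$ already computes one such $\B_0$ up to isomorphism, we get $\B\lemuch\A$ in $V[G]$. The symmetric argument gives $\A\lemuch\B$, so $V[G]\models \A\equivmuch\B$ and hence $\A\equivgen\B$.

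There is no real obstacle here; the corollary is genuinely immediate once Lemma \ref{potential isomorphism} is in hand. The only minor point worth stating carefully is the elementary observation that an isomorphism between countable structures yields Muchnik reducibility in both directions, which is the routine translation between an abstract isomorphism and a computable transformation of atomic diagrams.
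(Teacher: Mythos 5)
Your proof is correct and follows the same route the paper intends: the corollary is stated there without proof, as an immediate consequence of Lemma \ref{potential isomorphism}, which is exactly how you argue. The one simplification worth noting is that once $\A\cong\B$ holds in $V[G]$, any $\om$-copy of $\A$ is literally an $\om$-copy of $\B$, so the identity reduction already witnesses $\A\equivmuch\B$ and the detour through the isomorphism $f$ (and the oracle $\A_0\oplus f$) is unnecessary.
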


This lets us connect $\equivin$-equivalence  and generic Muchnik reducibility in a strong way:

\begin{lemma}\label{copying}
Let $\A$ be a structure.
The following are equivalent:
\begin{enumerate}
\item $\A\legen \B$ for some countable structure $\B$.
\item $\A\equivgen \B$ for some countable structure $\B$.
\item $\A\equiv_{\infty\omega}\B$ for some countable structure $\B$.
\end{enumerate}
\end{lemma}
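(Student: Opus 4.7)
The plan is to go through the cycle $(3) \Rightarrow (2) \Rightarrow (1) \Rightarrow (3)$.  The first two implications should be immediate: $(3) \Rightarrow (2)$ is the preceding corollary, and $(2) \Rightarrow (1)$ is built into the definition of $\equivgen$.  The content of the lemma is in $(1) \Rightarrow (3)$.

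For $(1) \Rightarrow (3)$, I would assume $\A \legen \B$ with $\B$ countable and pass to a generic extension $V[G]$ in which $\A$ becomes countable, say by collapsing $|\A|$ to $\om$.  Since $\B$ is countable in $V$, it remains countable in $V[G]$, and the definition of $\legen$ then gives $\A \lemuch \B$ in $V[G]$.  Next I would fix, once and for all, an $\om$-copy $B$ of $\B$ that lives in $V$; such a $B$ exists because $\B$ is countable in $V$.

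The key move is to apply Muchnik reducibility in $V[G]$ to this particular copy $B$: this yields a Turing functional $\Phi$ such that $\Phi^B$ is an $\om$-copy of $\A$ in $V[G]$.  The point I want to leverage is that $\Phi$ is coded by a natural number, so $\Phi \in V$, and Turing computation is absolute (a $\Delta_0$ operation), so the structure $\B' := \Phi^B$ is itself already an element of $V$.  Since its domain is $\om$, $\B'$ is countable in $V$.

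To finish, I would observe that in $V[G]$ the structure $\B'$ is isomorphic to $\A$, hence $\B' \equivin \A$ in $V[G]$; then, because $\alpha$-back-and-forth equivalence is defined absolutely by transfinite recursion and $\equivin$ is the intersection of these equivalences over all ordinals (equivalently, by Karp's theorem, the assertion that a back-and-forth system exists), the relation $\equivin$ is absolute between $V$ and $V[G]$.  So $\B' \equivin \A$ already in $V$, witnessing~(3).  The only non-obvious step is really the one highlighted above: the index of the Turing reduction is a natural number and thus sits in $V$, even though the statement $\A \lemuch \B$ is only guaranteed to hold in $V[G]$.
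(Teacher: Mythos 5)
Your proposal is correct and follows essentially the same route as the paper: both reduce the content to $(1)\Rightarrow(3)$, fix a copy $B$ of $\B$ in $V$, extract in $V[G]$ an index $e$ with $\Phi_e^B\cong\A$, and use the fact that the index is a natural number together with the absoluteness of Turing computation to conclude that $\Phi_e^B$ is a countable structure already in $V$ that is $\equivin$-equivalent to $\A$. The only cosmetic difference is that you spell out the absoluteness of $\equivin$ at the end, where the paper invokes it implicitly via Lemma \ref{potential isomorphism}.
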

\begin{proof} Clearly $(3)$ implies $(2)$ and $(2)$ implies $(1)$.  To see that $(1)$ implies $(3)$, suppose $\A\legen\B$ for $\B$ countable, let $B$ be a copy of $\B$ in $V$, and let $V[G]$ be a generic extension in which $\A$ is countable. Then in $V[G]$, there is some index $e$ such that, for the $e$th Turing machine $\Phi_e$,
$\Phi_e^B\cong\A$.  This means that in $V$, $\Phi_e^B$ must be total, and so $\Phi_e^B$ is a copy of $\mathcal{A}$ which lives in $V$.
\end{proof}

\subsection{Examples}
\label{Examples} 

We present below some examples of uncountable structures whose complexity in terms of $\legen$ we have been able to analyze.

\begin{exmp}\label{powerset example}  
Let $\mathcal{U}$ be the structure with domain $\omega\sqcup\P (\omega)$, with signature consisting of only the $\in$-relation on $\omega\times\P (\omega)$. 
\end{exmp}

\begin{prop}
$\mathcal{U}\equivgen 0$, where $0$ is the empty structure.  
\end{prop}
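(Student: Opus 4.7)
The plan is to split $\equivgen$ into its two reductions. The direction $0\legen\mathcal{U}$ is immediate: the empty structure is Muchnik-reducible to any structure in any generic extension. For the substantive direction $\mathcal{U}\legen 0$, I plan to appeal to Lemma~\ref{copying}: it is enough to exhibit a computable countable structure $\mathcal{U}^*\in V$ with $\mathcal{U}\equivin\mathcal{U}^*$. Indeed, by Lemma~\ref{copying} such a $\mathcal{U}^*$ satisfies $\mathcal{U}\equivgen\mathcal{U}^*$, while the computability of $\mathcal{U}^*$ directly gives $\mathcal{U}^*\equivgen 0$ (since a computable structure is its own computable copy in every extension making it countable); transitivity of $\equivgen$ then completes the argument.

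To describe $\mathcal{U}^*$, I will first identify the $\L_{\infty\om}$-invariants of $\mathcal{U}$. In the signature of $\in$ alone, atoms and sets are $\L_{\infty\om}$-definable: the atoms are the elements that appear on the left of some $\in$-pair, the nonempty sets are those appearing on the right, and the empty set is the unique element isolated from both. By Karp's back-and-forth characterization, $\L_{\infty\om}$-equivalence between two such bipartite structures reduces to matching, for every finite tuple of sets $\bar b=(b_1,\dots,b_m)$, the \emph{cell profile} of $\bar b$: the $(\om\cup\{\aleph_0\})$-valued vector recording, for each $\sigma\in 2^m$, the size of the Boolean combination of $R^{-1}(b_1),\dots,R^{-1}(b_m)\subseteq A$ determined by $\sigma$. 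Since every subset of $A$ appears as some $R^{-1}(b)$ in $\mathcal{U}$, every profile $(c_\sigma)_{\sigma\in 2^m}$ with $\sum_\sigma c_\sigma=\aleph_0$ is realized, and is realized by $\aleph_0$-many tuples whenever the profile admits variation.

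I then plan to build $\mathcal{U}^*$ effectively by declaring $A^*=\{2k:k\in\om\}$ to be its atoms and $B^*=\{2k+1:k\in\om\}$ its sets, and assigning to each $b\in B^*$ a computable subset $S_b\subseteq A^*$ drawn from a family rich enough to realize every cell profile. A concrete choice is to let the $S_b$'s range over the finite subsets of $A^*$, the cofinite subsets, and the sets $F\triangle E_G$ where $F\subseteq A^*$ is finite and $E_G=\{2k\in A^*:\text{the $G$-th bit of }k\text{ is }1\}$; the family $\{E_G\}$ is Boolean-independent, so combined with the finite modifications by $F$ it realizes every cell profile. After a routine canonicalization to ensure extensionality (distinct codes yielding distinct subsets), the desired $\L_{\infty\om}$-equivalence follows from the standard cell-profile-preserving back-and-forth system, the extension step being always possible because every required profile is realized on both sides.

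The principal technical obstacle I anticipate is the combinatorial verification that the chosen family realizes \emph{every} cell profile---in particular, those mixing several infinite cells with finite cells of various prescribed sizes. The key points are that the independence of the $E_G$'s handles profiles whose cells are all infinite, while finite $F$-modifications adjust individual finite cells without disturbing the asymptotic structure of the $E_G$'s. Once this check is spelled out, the back-and-forth argument is routine, and the proof is complete.
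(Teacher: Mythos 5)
Your overall strategy is the same as the paper's: produce a computable structure $\mathcal{U}^*$ with $\mathcal{U}\equivin\mathcal{U}^*$ via a back-and-forth system whose invariants are the cardinalities of the Boolean combinations (the ``cell profiles'') of a tuple of sets, then transfer by absoluteness of $\equivin$. That reduction is fine and the identification of the invariants is correct. The gap is in the concrete family you propose: finite sets, cofinite sets, and sets of the form $F\triangle E_G$ with $F$ finite do \emph{not} realize every cell profile, and they also fail the extension step. For a concrete failure of realizability: your family contains no pair $b_1,b_2$ that are disjoint, both infinite, and whose union is co-infinite. Such $b_1,b_2$ would each have to be infinite and non-cofinite, hence each of the form $F_i\triangle E_{G_i}$; but any two such sets have infinite intersection (it agrees up to a finite set with $E_{G_1}\cap E_{G_2}$, or with $E_{G_1}$ when $G_1=G_2$, and these are infinite by independence). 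The extension step fails too: with $\bar b=(E_0,E_1)$ matched to a tuple of $\mathcal{U}$ all of whose cells are infinite, the opponent can play a $c$ containing $E_0\triangle E_1$ and the complement of $E_0\cup E_1$ while splitting $E_0\cap E_1$ into two infinite pieces; no member of your family has that trace on those cells. Boolean independence of the $E_G$'s is the wrong target --- it only guarantees the single profile in which every cell is infinite.

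What is actually needed (and what the paper uses) is a uniformly computable family $P$ of subsets of $\omega$ that is an \emph{algebra} (closed under union, intersection, and complement), contains all finite sets, and has the splitting property: every infinite $X\in P$ is the disjoint union of two infinite members of $P$. These closure conditions are exactly what make the extension step work: a finite cell is handled by an arbitrary finite subset; an infinite cell with a prescribed finite/cofinite split is handled using finite sets plus closure under complement and intersection; an infinite/infinite split is handled by the splitting property; and closure under finite unions assembles the chosen pieces into a single new set of the family. The primitive recursive subsets of $\omega$ form such a family and give a computable $\mathcal{U}^*$. With your family replaced by one satisfying these conditions, the rest of your argument goes through and coincides with the paper's proof.
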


\begin{proof}
We will show there is a computable structure $\mathcal{S}$ that is $\equivin$-equivalent to $\mathcal{U}$.
By the absoluteness of $\equivin$, we will then have that in any generic extension that makes $\mathcal{U}$ countable, $\mathcal{S}$ and $\mathcal{U}$ are still $\equivin$-equivalent, and, hence, isomorphic. 

We note that the orbit in $\mathcal{U}$ of a tuple of sets $\bar{X}$ is determined by the cardinalities of the Boolean combinations of the sets $X_i$.  To guarantee that we have a back-and-forth family of finite partial isomorphisms, we let $\mathcal{S}$ consist of $\omega$ together with a family of sets $P$ having the following properties:

\begin{itemize}
\item  $P$ is an algebra of sets; i.e., it is closed under union, intersection, and complement,
\item  $P$ includes all finite sets, 
\item  if $X\in P$ is infinite, then there are disjoint $Y,Z\in P$, both infinite, such that $Y\cup Z = X$.
\end{itemize}

We can easily find such an $\mathcal{S}$ which is computable.    
We could, for example, take the family of primitive recursive sets.  
\end{proof}    

Similarly, the field of complex numbers is essentially computable.

\begin{exmp}

Let $\mathcal{C}=(\mathbb{C}; +, \times)$.  This is $\equivin$-equivalent to the algebraically closed field of countably infinite transcendence degree and characteristic zero.  By a well-known result of Rabin \cite{Rab60}, this has a computable copy.  Then $\mathcal{C}$ has minimal complexity; that is, $\mathcal{C}$ has a computable copy in every generic extension in which it is countable.

\end{exmp}

If we consider a variant of $\mathcal{U}$ in which the elements of $\omega$ have names, we reach the opposite end of the complexity spectrum:

\begin{exmp}\label{power set with successor}

Let $\mathcal{W}$ be the expansion of $\mathcal{U}$ including the successor relation on $\omega$.  Then any $\omega$-copy (\ref{remark on copies}) of $\mathcal{W}$ computes every real in the ground model $V$, so given any countable $\mathcal{A}\in V$ we have $\mathcal{A}\legen\W$.

\end{exmp}

The situation is the same with respect to the real numbers.

\begin{exmp} The field of real numbers $\mathcal{R}=(\RR; +, \times)$ is, like $\W$, maximally complicated with respect to countable structures: for every countable structure $\mathcal{A}$, we have $\mathcal{A}\le_w^*\mathcal{R}$. To see this, suppose $V[G]$ is a generic extension in which $\mathcal{R}$ has an $\omega$-copy, $\R$. First, note that the standard ordering $<_\RR$ is both $\Sigma_1$ and $\Pi_1$ over $\mathcal{R}$, and so the corresponding relation on $\R$ is computable relative to $\R$.\footnote{That is, $<_\RR$ is a {\em relatively intrinsically computable} relation.}

Now fix a real in the ground model $b\in\mathcal{R}$ and let $\hat{b}\in\R$ be the corresponding element of the $\omega$-copy. Since $<_\R$ is computable from the atomic diagram of $\R$, so is the cut corresponding to $b$, so every real in the ground model is computable from the atomic diagram of $\R$. Since $\R$ was an arbitrary $\omega$-copy of $\mathcal{R}$ in an arbitrary generic extension, it follows that $\mathcal{R}\ge_w^*\mathcal{A}$ for every countable $\mathcal{A}\in V$.
\end{exmp}

We would now like to compare the structures $\mathcal{R}$ and $\mathcal{W}$ under $\leq^*_w$.  It is east to show the following.      

\begin{prop}

$\mathcal{R}\geq^*_w\mathcal{W}$

\end{prop}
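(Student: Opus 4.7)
The plan is to bootstrap from the preceding example: if $\R$ is an $\om$-copy of $\mathcal{R}$ in a generic extension $V[G]$ in which both $\mathcal{R}$ and $\W$ are countable, then from $\R$ we can already compute every element of $\RR^V$, and we will use this to $\R$-computably enumerate $\P(\om)^V$ and thereby build an $\R$-computable $\om$-copy $W$ of $\W$.

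For each $k \in \om$ write $b_k \in \RR^V$ for the real represented by $k$ in the $\om$-copy $\R$. As in the preceding example, we $\R$-computably identify the prime subfield $\QQ \subseteq \R$ (by locating the $\R$-computable constants $0_{\R}, 1_{\R}$ and closing under the field operations) and the ordering $<_{\R}$, which in a real closed field is both $\Sigma_1$ and $\Pi_1$; hence uniformly in $k$ we $\R$-compute the Dedekind cut $D_k$ of $b_k$. Fix in $V$ a Borel bijection $\Phi \colon \RR \to \P(\om)$ with Borel inverse --- a concrete choice uses continued fractions to biject $\RR \setminus \QQ$ with the infinite-coinfinite subsets of $\om$, together with a by-hand bijection between the countable residual pieces $\QQ$ and the finite-or-cofinite subsets. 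Evaluating $\Phi$ at $b_k$ is $\R$-computable: the continued-fraction algorithm can be run inside $\R$ itself, since each floor $\lfloor r_i \rfloor$ is found by searching $n \in \mathbb{Z}$ for the unique $n$ with either $n \leq_{\R} r_i <_{\R} n+1$ or $r_i =_{\R} n$, and the algorithm terminates exactly when $b_k$ turns out to be rational, at which point we fall back on the by-hand assignment of $\Phi$ on $\QQ$. Thus $k \mapsto \Phi(b_k)$ is an $\R$-computable bijection from $\om$ onto $\P(\om)^V$ in $V[G]$.

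Given this enumeration, build $W$ with domain $\om$ as follows: let the even numbers $\{2n : n \in \om\}$ form the numerical part with successor $2n \mapsto 2(n+1)$, let the odd numbers $\{2k+1 : k \in \om\}$ form the set part with $2k+1$ representing $\Phi(b_k)$, and define membership by $2n \in_W 2k+1 \iff n \in \Phi(b_k)$. Then the atomic diagram of $W$ is $\R$-computable, and the evident map sending $2n \mapsto n$ and $2k+1 \mapsto \Phi(b_k)$ is an isomorphism $W \cong \W$, so $\mathcal{R} \geq^*_w \W$.

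The one step that requires genuine care is the explicit choice of the Borel bijection $\Phi$. The continuum parts of $\RR$ and $\P(\om)$ fit together nicely via continued fractions and the countable residues require only a small amount of combinatorial bookkeeping, but one must check that the resulting map is uniformly $\R$-computable in the way indicated above. Everything else is an immediate consequence of the ideas in the preceding example.
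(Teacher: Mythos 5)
There is a genuine gap at the crucial step, namely the claim that $k\mapsto\Phi(b_k)$ is computable from the atomic diagram of the $\omega$-copy. Your $\Phi$ is defined by an undecidable case split: the continued-fraction coding on the irrationals versus an unrelated ``by-hand'' bijection on $\mathbb{Q}$. The predicate ``$b_k\in\mathbb{Q}$'' is $\Sigma_1$ over $\mathcal{R}$ (an infinite disjunction of equations $n\cdot x=m$) but not $\Pi_1$: any $\Sigma_1$-definable set avoiding $\mathbb{Q}$ is a countable union of semialgebraic sets containing no interval, hence countable, so the irrationals are not $\Sigma_1$-definable and $\mathbb{Q}$ is not relatively intrinsically computable. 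Concretely, your procedure for deciding ``$n\in\Phi(b_k)$'' must commit to an answer after computing only finitely many terms of the continued fraction; at that stage the data are consistent both with $b_k$ being irrational (answer read off from the continued-fraction coding) and with the expansion terminating later (answer rerouted to the arbitrary assignment on $\mathbb{Q}$). So the algorithm either waits forever to confirm irrationality or halts with an answer it cannot certify, and for a sufficiently generic $\omega$-copy the set of indices of rationals really is c.e.\ and non-computable, so the case split cannot simply be decided. The gap is repairable within your framework: code \emph{every} real, rational or not, by the set of partial sums of its canonical (finite or infinite) continued-fraction expansion, so that membership of $n$ is settled as soon as either the algorithm terminates or the partial sums exceed $n$, whichever comes first; the countably many subsets of $\omega$ missed by this total injective coding can then be appended to the enumeration separately rather than being triggered by an undecidable condition on the input.

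For comparison, the paper's proof uses binary rather than continued-fraction expansions: each $r\in[0,1)$ is sent to the set of positions where its canonical (no trailing string of $1$s) binary expansion has a $1$. Each digit is settled by one decidable comparison with a dyadic rational, so the coding is total, injective, and computable with no case split; the only sets missed are the cofinite ones, and these are added to the enumeration by hand. Once you make the repair above, your argument and the paper's are essentially the same in spirit, differing only in the choice of coding of reals by sets.
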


\begin{proof}

We can use the elements of $\mathcal{R}$ in the interval $[0,1)$ to enumerate the subsets of $\omega$ in $V$.  To each real $r$ in the interval, we associate the set $A_r$ consisting of those $n$ such that the $n^{th}$ term in the binary expansion of $r$ is $1$.
Minimal care has to be taken for double binary representations: if we assume no binary expansion ends up in an infinite string of $1$s, we then need to add those sets.
\end{proof}


\begin{question}

Do we have $\mathcal{W}\geq^*_w\mathcal{R}$?

\end{question}

For an elementary extension $\mathcal{M}$ of $\mathcal{R}$ that is $\omega$-saturated, we have $\mathcal{W}\geq_w^*\mathcal{M}$.  More generally, we have the following.  

\begin{prop}

Let $\mathcal{M}$ be an $\omega$-saturated model of a decidable complete elementary first order theory $T$.  Then $\mathcal{W}\geq_w^*\mathcal{M}$.  

\end{prop}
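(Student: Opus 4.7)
The plan is to work in a generic extension $V[G]$ in which both $\mathcal{W}$ and $\mathcal{M}$ are countable, fix an $\omega$-copy $\widehat{\mathcal{W}}$ of $\mathcal{W}$ there, and computably from $\widehat{\mathcal{W}}$ produce an $\omega$-copy of $\mathcal{M}$. As observed in Example \ref{power set with successor}, any $\omega$-copy of $\mathcal{W}$ yields a uniform enumeration of $\P(\omega)^V$; fix such an enumeration $X_0, X_1, \ldots$ computable in $\widehat{\mathcal{W}}$. Since $T\in V$, for every $n$ every complete $n$-type of $T$ is a real of $V$, so all such types appear among the $X_i$ (interpreted as sets of formulas in $n$ free variables under some fixed G\"odel coding). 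The enumeration will also contain many reals that are not types, and we make no attempt to filter these out.

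Next I would carry out a standard Henkin construction of a countable $\omega$-saturated model of $T$, using the decidability of $T$ together with the enumeration $(X_i)$. Introduce Henkin constants $c_0, c_1, \ldots$ and build, in stages, a consistent, complete, Henkinized theory $T^* \supseteq T$ in the language $L\cup\{c_n : n<\omega\}$; because $T$ is decidable, the consistency with $T$ of the finitely many formulas committed so far is uniformly checkable. To enforce saturation, for each pair $(e,\bar c)$ with $\bar c$ a finite tuple of constants already present, reserve a fresh constant $c_{e,\bar c}$ and interpret $X_e$ as a set of formulas $\varphi(\bar x,y)$ with $|\bar x|=|\bar c|$; at successive substages devoted to $(e,\bar c)$, read off the next formula $\varphi$ of $X_e$ and add $\varphi(\bar c,c_{e,\bar c})$ to $T^*$, provided doing so preserves consistency with the finite set of previous commitments.

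If $X_e$ is a genuine complete $(|\bar c|+1)$-type of $T$ whose restriction to $\bar x$ equals the type of $\bar c$ already committed in $T^*$, then every finite fragment is consistent at every stage, so the whole type eventually gets committed and $c_{e,\bar c}$ realizes the corresponding $1$-type over $\bar c$; if instead $X_e$ is ``garbage'' (inconsistent, incomplete, or incompatible with $\bar c$), the offending formulas fail the consistency check and are simply dropped, doing no harm. Since every complete $1$-type over any finite tuple of constants in $T^*$ corresponds to some complete type of $T$ and hence appears as some $X_e$, the resulting Henkin model is $\omega$-saturated; being countable, by the uniqueness of the countable $\omega$-saturated model of a complete theory it is isomorphic in $V[G]$ to $\mathcal{M}$, yielding the desired $\omega$-copy. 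The main obstacle I anticipate is precisely that $(X_i)$ enumerates \emph{all} reals of $V$, not just the types of $T$; the incremental, consistency-tested commitment scheme above handles this because decidability of $T$ makes every test effective and because genuine types automatically pass every such test.
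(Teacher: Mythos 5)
Your construction diverges from the paper's (which simply invokes the Macintyre--Marker theorem on Scott sets), and as written it has a genuine gap at the final step. You conclude by claiming the Henkin model is $\omega$-saturated in $V[G]$ and then invoking uniqueness of the countable $\omega$-saturated model. But $\omega$-saturation is not absolute: if $T$ has uncountably many types in $V$ (e.g.\ Presburger arithmetic, which is decidable, complete, and has $2^{\aleph_0}$ many $1$-types), then any $\omega$-saturated $\mathcal{M}$ has size at least continuum, so the relevant $V[G]$ collapses $2^{\aleph_0}$ and adds new branches through the (perfect) type space. These new types are finitely satisfiable but lie outside $V$, hence are realized neither in $\mathcal{M}$ nor in your Henkin model; in $V[G]$ \emph{no} countable structure is $\omega$-saturated, and the uniqueness principle you appeal to says nothing. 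The invariant that actually survives collapse is: countable, recursively saturated, and realizing exactly the types of $T$ lying in the Scott set $\P(\omega)^V$. Two such models are isomorphic by a back-and-forth, and that is precisely what the paper gets from Macintyre--Marker.

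There is a second, related gap in the construction itself: you only arrange that each listed $X_e$ that happens to be a suitable type gets realized, but you never control the types of arbitrary tuples of Henkin constants. The completion and Henkinization steps are computed from $\widehat{\mathcal{W}}$, a real of $V[G]$ not in $V$, so the complete type of a tuple such as $(c_{e,\bar c}, c_{e',\bar c'})$ produced by your scheme may be a new real of $V[G]$. If so, your model realizes a type not realized in $\mathcal{M}$ and cannot be isomorphic to it. Repairing this requires steering every tuple's type back into $\P(\omega)^V$ at each stage, using that $\P(\omega)^V$ is a Scott set in $V[G]$ (closed under join, relative computability, and paths through infinite binary trees, the last by absoluteness of well-foundedness/compactness in $V$). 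Once both repairs are made you have essentially reproved the Macintyre--Marker theorem; the skeleton of your Henkin argument is salvageable, but the two claims doing the real work --- ``the model is $\omega$-saturated'' and ``only listed types are realized'' --- are exactly the ones that need the Scott-set machinery rather than decidability of $T$ alone.
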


\begin{proof}

Macintyre and Marker \cite{MM84} showed that for an enumeration $\mathcal{E}$ of a Scott set $\mathcal{S}$, and an elementary first order theory $T$ in $\mathcal{S}$, 
$\mathcal{E}$ computes the complete diagram of a recursively saturated model of $T$ realizing exactly the types in $\mathcal{S}$ that are consistent with $T$.  After we collapse the cardinal so that $\mathcal{W}$ becomes computable, it computes an enumeration $\mathcal{E}$ of the Scott set $\mathcal{S}$ consisting of the subsets of $\omega$ in $\mathcal{W}$.  Now, the theory of $\mathcal{M}$ is in $\mathcal{S}$, and the types realized in $\mathcal{M}$ are exactly those in $\mathcal{S}$ that are consistent with $T$.  Then the result of 
Macintyre and Marker yields a recursively saturated model realizing exactly these types.  This model is isomorphic to the collapse of $\mathcal{M}$.         
\end{proof}

Finally, uncountable well-orderings live strictly between the two extremes.

\begin{exmp}
\label{complexity of om1} 

The linear order $\om_1=(\omega_1, <)$ computes precisely those countable structures which are Muchnik reducible to some countable well-ordering. One direction is obvious; in the other direction, suppose $\mathcal{A}\le_w^*\om_1$ is countable, and let $V[G]$ be a forcing extension in which $\om_1$ is countable. Then $V[G]$ satisfies ``$\mathcal{A}$ is weakly reducible to a countable well-ordering," which is $\Sigma^1_2$ via \ref{obsv}, and so already true in $V$ by Shoenfield absoluteness.
\end{exmp}

\begin{proposition} 

$\mathcal{R}>_w^*\om_1$ strictly. 

\end{proposition}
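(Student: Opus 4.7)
For the direction $\mathcal{R}\gegen\om_1$, given an $\omega$-copy $\widetilde{\mathcal{R}}$ of $\mathcal{R}$ in a generic extension $V[G]$ making both structures countable, the preceding analysis of $\mathcal{R}$ lets us effectively enumerate every real of $V$ by reading off the Dedekind cuts of the elements of $\widetilde{\mathcal{R}}$. Among these reals are those coding well-orderings of $\om$, and their order types exhaust $\om_1^V$. Comparing order types via the uniformly computable comparison of well-orderings and discarding duplicates yields an $\omega$-copy of $(\om_1^V,<)$.

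For the strict inequality, I would fix any non-computable real $r\in V$ (for instance $0'$) and force with the product $\text{Col}(\om,(2^{\aleph_0})^V)\times\text{Col}(\om,\om_1^V)$. Letting $(G_0,G_1)$ be the resulting generic, both $\om_1^V$ and $\mathcal{R}^V$ become countable in $V[G_0,G_1]$, and $G_1$ determines an $\omega$-copy $W$ of $\om_1^V$ which, by the product-forcing lemma, is $\text{Col}(\om,\om_1^V)$-generic over $V[G_0]$. The aim is then to show $r\not\leq_T W$: this blocks any Turing reduction from $W$ to an $\omega$-copy of $\mathcal{R}$ (such a copy Turing-computes every real of $V$, in particular $r$, via the Dedekind-cut procedure), yielding $\mathcal{R}\not\legen\om_1$.

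The main obstacle is justifying $r\not\leq_T W$. Here I would invoke the weak homogeneity of the Levy collapse: a putative reduction $\Phi^W=r$ with $r\in V[G_0]$ would, by homogeneity, force $\Phi^{W'}=r$ for every $\text{Col}(\om,\om_1^V)$-generic $W'$ over $V[G_0]$. A standard compactness argument---each computation $\Phi^{W'}(n)$ queries $W'$ at only finitely many positions whose values range freely among generics---then forces $\Phi$ to produce $r$ without essentially using the oracle, contradicting the non-computability of $r$.
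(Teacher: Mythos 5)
Your proposal has genuine gaps in both directions, and in each case the paper's proof contains a device that your sketch is missing.

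For $\om_1\legen\mathcal{R}$: the step ``among these reals are those coding well-orderings\ldots{} comparing order types via the uniformly computable comparison of well-orderings and discarding duplicates'' does not go through. Recognizing whether a coded linear order is well-founded is $\Pi^1_1$, not computable; comparison of well-orderings is likewise not uniformly computable from their codes; and discarding duplicates would require deciding isomorphism. The paper's proof circumvents all of this: since \emph{every} real of $V$ is coded in the $\omega$-copy $\R$, a ground-model linear order that is not well-founded has a descending sequence that is itself coded in $\R$, so ``is a well-order'' becomes arithmetical in $\R$ and decidable from $\R\dprime$. One then takes the \emph{sum} of all the coded orders (replacing non-well-orders by finite orders), which has type exactly $\om_1^V$ because $\om_1^V$ is additively indecomposable --- no comparison or deduplication is needed --- and finally absorbs the two jumps using the Ash--Knight theorem that if $X\dprime$ computes a copy of $\L$ then $X$ computes a copy of $\om\cdot\L$, together with $\om\cdot\om_1^V\cong\om_1^V$. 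Without the descending-sequence observation and the jump-absorption step, your construction at best produces a copy of $\om_1^V$ computable from $\R\dprime$, not from $\R$; as written it produces nothing computable at all.

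For the strictness, your reduction to showing $r\not\le_T W$ for some copy $W$ of $\om_1^V$ is exactly the paper's, but the paper then simply cites Richter's theorem (the only sets computable from every copy of a countable linear order are the computable ones), whereas you propose to reprove the needed instance by forcing. Your sketch has a gap: $(\om_1^V,<)$ is rigid, so no nontrivial automorphism of $\mathrm{Col}(\om,\om_1^V)$ fixes the induced name for $W$, and the claim that the finitely many queried ``values range freely among generics'' is false --- which finite order-patterns are realizable below a condition depends on how many ordinals lie in each gap between that condition's images (nothing fits between $\alpha$ and $\alpha+1$, and only $|\alpha|$ elements fit below $\alpha$). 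The argument can be repaired, since this realizability data is a finite parameter from which the search for a deciding pattern becomes computable, but carrying that out is precisely Richter's proof; you should either supply it or cite \cite{Ric81} as the paper does.
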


\begin{proof} 

To see that $\mathcal{R}\not\le_w^* \om_1$, fix some non-computable real $r\in\mathcal{R}$. Then the cut corresponding to $r$, and hence $r$ itself, is computable in any $\omega$-copy $\R$ of $\mathcal{R}$ in any generic extension since the ordering relation is both $\Sigma_1$ and $\Pi_1$.  On the other hand, by a result of Richter \cite{Ric81}, the only sets computable in all copies of a countable linear ordering are the computable sets, so in any generic extension in which $\omega_1$ is countable there will be $\omega$-copies of $\om_1$ whose atomic diagrams do not compute $r$.  

To see that $\om_1\le_w^*\mathcal{R}$, suppose $V[G]$ is a generic extension in which $\mathcal{R}$ is countable, and let $\R\in V[G]$ be a copy of $\mathcal{R}$ with domain $\om$.  Now $\R$ computes an enumeration of the sets coded by the cuts in $\mathcal{R}$---the reals in $V$.  Some of the reals code linear orderings.  For an ordering $r$ coded in $\R$, if $r$ is not a well ordering, this is witnessed by a decreasing sequence $d$, also coded in $\R$.  A countable well ordering in $V$ is isomorphic to a countable ordinal, so it stays well ordered in $V[G]$.  Using $\R\dprime$, we get an $\omega$-sequence of well-orderings:  For $a\in \R$, we take the ordering coded by $a$, if this is a well ordering, and otherwise, we have a finite ordering.  The result is an ordering of type $\omega_1^V$.  
Now, we apply in $V[G]$ the theorem saying that, for any set $X$ and any linear order $\mathcal{L}$, if $X\dprime$ computes a copy of $\L$ then $X$ computes a copy of $\om\cdot\L$ (\cite{AK00}, Theorem 9.11).  Since $\omega_1^V\isom \om\cdot\omega_1^V$, our $\R$ computes a copy of $\omega_1^V$.  
\end{proof}

By a similar argument, we also have $\mathcal{W}>_w^*\om_1$.

\subsection{Generic presentability}
\label{genericpresentability} 

We end this section with some basic results about generic presentability, which will be used in the remaining sections.
First, we recall Solovay's proof that every generically presentable \it set \rm is already present in the ground model.

\begin{thm}[Solovay \cite{Sol70}]\label{Solovay}  Suppose $S$ is a set present in every generic extension of $V$ by $\PP $. 
Then $S\in V$.
\end{thm}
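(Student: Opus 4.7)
The plan is to invoke the classical mutually-generic-filters argument. First, using a well-ordering of $V$, reduce to the case that $S$ is a set of ordinals. Next, force with $\PP \times \PP$ over $V$ to obtain a generic $H$, whose projections $G_0, G_1 \subseteq \PP$ are \emph{mutually $V$-generic}; the product forcing lemma gives that each $G_i$ is $V$-generic (and moreover $G_1$ is $V[G_0]$-generic, and vice versa). By hypothesis, $S \in V[G_0]$ and $S \in V[G_1]$, so the proof reduces to the \emph{intersection lemma}: $V[G_0] \cap V[G_1] = V$.

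For the intersection lemma, suppose $A \subseteq \mathrm{Ord}$ lies in $V[G_0] \cap V[G_1]$, and choose $\PP$-names $\dot{A}_0, \dot{A}_1 \in V$ with $\dot{A}_i^{G_i} = A$. The equality $\dot{A}_0^{G_0} = \dot{A}_1^{G_1}$ holds in $V[G_0][G_1]$ and is therefore forced by some product condition $(p_0, p_1) \in G_0 \times G_1$. Define in $V$
$$ B = \{\alpha : p_1 \Vdash_{\PP} \alpha \in \dot{A}_1\},$$
so $B \in V$; the aim is to show $B = A$. The inclusion $B \subseteq A$ follows from $p_1 \in G_1$. For $A \subseteq B$, suppose some $\alpha \in A$ had $p_1 \not\Vdash \alpha \in \dot{A}_1$; then some $r_1 \leq p_1$ forces $\alpha \notin \dot{A}_1$. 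Choose $s_0 \leq p_0$ in $G_0$ forcing $\alpha \in \dot{A}_0$ (possible since $\alpha \in \dot{A}_0^{G_0}$). The product condition $(s_0, r_1) \leq (p_0, p_1)$ then forces both $\alpha \in \dot{A}_0$ and $\alpha \notin \dot{A}_1$ (using that each $\dot{A}_i$ depends only on the $i$th coordinate), contradicting the forced equality. Hence $B = A \in V$, and applying this to a code of $S$ gives $S \in V$.

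The main obstacle is the intersection lemma itself, and specifically the step of extracting a $V$-definition of $A$ from the forced equality of the two one-sided names. The conceptual point is that mutual genericity makes the two generics as independent as possible, so an equation forced on the product can be translated into a statement decidable in $V$: otherwise, an extension on one side could be paired with any extension on the other to refute the forced equality. The remainder of the argument is routine bookkeeping --- reducing from arbitrary sets to sets of ordinals via a well-ordering of $V$, and invoking the product forcing lemma for the existence of mutually generic filters.
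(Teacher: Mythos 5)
Your argument is correct, but it takes a genuinely different route from the paper. The paper first shows that a \emph{single} name $\nu$ works for all generics (a density argument over $V[H]$, generalized later as Lemma 2.19), and then recovers $S$ inside $V$ by taking a rank-minimal counterexample and setting $\hat{S}=\{x\in V_\alpha:\exists p\,(p\Vdash \check{x}\in\nu)\}$. You instead reduce to sets of ordinals and invoke the intersection lemma $V[G_0]\cap V[G_1]=V$ for mutually generic $G_0,G_1$, which you prove by the standard two-coordinate argument (a condition $(s_0,r_1)\le(p_0,p_1)$ forcing $\alpha\in\dot{A}_0$ and $\alpha\notin\dot{A}_1$ contradicts the forced equality). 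Both are classical proofs of Solovay's result. Two remarks on the comparison. First, your ``routine bookkeeping'' reduction to sets of ordinals is not quite free: to code $S$ by ordinals via a well-ordering in $V$ you need $S\subseteq V$ (and $S\subseteq V_\alpha$ for some $\alpha$), and establishing that every \emph{element} of $S$ lies in $V$ requires exactly the rank induction the paper performs explicitly; you should say this rather than fold it into the well-ordering step. Second, the paper's decomposition is chosen with the rest of the paper in mind: the single-name step is precisely what survives when one passes from sets to structures (Lemma 2.19), while the $\hat{S}$-extraction step is precisely what fails (Section 3.3), so the paper's proof localizes where the generalization breaks down. Your intersection-lemma route is more modular and arguably more standard, but it obscures that dividing line.
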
  

\begin{proof} 
Let us start by proving that there is a single name $\nu$ such that $\nu[G]=S$ for every $\PP$-generic $G$.
Fix a generic $H$ so that we can talk about $S$ within $V[H]$.
If there is no single name $\nu$ such that $\nu[G]=S$ for every $\PP$-generic $G$, then for every name $\nu\in V^\PP $ there is some generic $G$ over $V[H]$ with $\nu[G]\not=S$.
Then, for each $\nu\in V^\PP $, the set $\{p\in \PP : p\Vdash\nu\not=S\}$ is dense and is in $V[H]$ since $S\in V[H]$.
But then, if $G$ is $\PP$-generic over $V[H]$,  $S\not\in V[G]$, so $S$ is not generically presentable.

Let us now go back to the proof of the theorem.
Suppose $S\not\in V$.  
Let $S$ be a counterexample of minimal rank $\alpha$.  
Then each element of $S$ is in $V$, and in particular in the set $V_\alpha$. Let 
\[\hat{S}=\{x\in V_\alpha: \exists p\in \PP (p\Vdash x\in\nu)\}.\] 
Then clearly $\hat{S}=S$, and so $S\in V$.
\end{proof}

The goal of this paper is to investigate the extent to which Solovay's theorem holds for structures, and, in particular, to understand how it interacts with computability in generic extensions. 
As we will see in Section \ref{om2bad}, the proof of Solovay's theorem cannot be naively extended to structures. However, one key step still holds, and this greatly simplifies arguments involving generic presentability.  

\begin{lem}
\label{Name} 

Suppose $\mathcal{N}$ is generically presentable by $\PP $. Then there is some name $\nu\in V^\PP $ such that 
$\nu[G]\cong\mathcal{N}$ for every $\PP $-generic $G$.
\end{lem}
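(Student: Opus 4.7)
My plan is to adapt the proof of Solovay's Theorem \ref{Solovay} to structures, using Shoenfield absoluteness to carry a forcing relation over an auxiliary extension of $V$ back down to $V$.  By Remark \ref{remark on copies} I may assume $W=V[H]$ for some $\PP$-generic $H$ over $V$, and fix a copy $\mathcal{N}_0\in V[H]$ of $\mathcal{N}$ with domain $\omega$; then $\mathcal{N}_0$ is simply a real in $V[H]$, and the relation ``$\nu[G]\cong\mathcal{N}_0$'' is $\Sigma^1_1$ in $\nu[G]$ and $\mathcal{N}_0$, hence absolute between models containing both.

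Define in $V[H]$ the set
\[
T\;=\;\{(p,\nu)\in\PP\times V^{\PP}: p\Vdash^{V[H]}_{\PP}\nu\cong\check{\mathcal{N}_0}\}.
\]
The key first step will be to establish that $T$ does not depend on the choice of $H$: for any other $\PP$-generic $H'$ over $V$ and copy $\mathcal{N}'_0\in V[H']$ of $\mathcal{N}$, one has $p\Vdash^{V[H]}\nu\cong\check{\mathcal{N}_0}$ iff $p\Vdash^{V[H']}\nu\cong\check{\mathcal{N}'_0}$.  I would prove this by passing through a third generic $H''$ mutually $V$-generic with both $H$ and $H'$ and observing that, on the $V[H,H'']$- (resp.\ $V[H',H'']$-) generic filters through $p$, the $\Sigma^1_1$-absoluteness of isomorphism together with $\mathcal{N}_0\cong\mathcal{N}''_0\cong\mathcal{N}'_0$ in $V[H,H',H'']$ makes the two forcing relations agree.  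Once invariance is in hand, $T$ lies in every $\PP$-generic extension of $V$ and is the same set throughout, so Solovay's Theorem \ref{Solovay} yields $T\in V$.

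Now working in $V$: generic presentability makes $T$ dense in its first coordinate, since given $p\in\PP$ one takes a $V[H]$-generic $G$ through $p$, finds $\nu\in V^{\PP}$ with $\nu[G]\cong\mathcal{N}$ by generic presentability, and extracts $q\leq p$ in $G$ with $(q,\nu)\in T$ from the forcing theorem over $V[H]$.  Choose (in $V$) a maximal antichain $A\subseteq\PP$ consisting of first coordinates of elements of $T$, together with a selection $p\mapsto\nu_p$ such that $(p,\nu_p)\in T$ for every $p\in A$, and mix the $\nu_p$ into a single name $\nu^*\in V^{\PP}$.  For any $\PP$-generic $G$ over $V$, let $p$ be the unique element of $A\cap G$; choosing $H'$ mutually $V$-generic with $G$, invariance gives $p\Vdash^{V[H']}\nu_p\cong\check{\mathcal{N}'_0}$, so $\nu^*[G]=\nu_p[G]\cong\mathcal{N}'_0\cong\mathcal{N}$, as required.

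The main obstacle will be the invariance of $T$: this is the step that turns a forcing relation over the auxiliary $V[H]$ into a $V$-intrinsic object, after which Solovay's theorem and the standard mixing argument are essentially routine.  The delicate point is combining $\Sigma^1_1$-absoluteness of countable-structure isomorphism with the comparison of forcing relations over two different ground models, which is most conveniently carried out by passing through a mutually generic auxiliary filter.
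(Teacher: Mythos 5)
Your proposal is correct in substance, but it is organized differently from the paper's proof. The paper also bills its argument as a generalization of the first step of Solovay's theorem, yet it never forms the set $T$ nor cites Theorem \ref{Solovay} as a black box: it fixes a single generic $H_0$ and a name $\mu$ with $\mu[H_0]\cong\mathcal{N}$, finds a condition $(p,1)$ and a name $\nu$ in the product forcing $\PP^2$ with $(p,1)\Vdash_{\PP^2}\mu[\dot{g}_0]\cong\nu[\dot{g}_1]$, and then, for an arbitrary generic $G_1$, verifies $\nu[G_1]\cong\mathcal{N}$ by chaining isomorphisms through auxiliary mutually generic filters. Both arguments run on the same two engines --- mutual genericity and the absoluteness of isomorphism for countable structures (Lemma \ref{potential isomorphism}) --- but yours reifies the whole forcing relation as an object $T$, proves $T$ is generically invariant, imports it into $V$ via Solovay, and only then mixes along an antichain; the paper's choice of a condition of the form $(p,1)$ quietly absorbs the mixing step that you make explicit. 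Your route is longer but arguably more transparent, and it yields the extra information that the ``isomorphism-to-$\mathcal{N}$'' forcing relation itself lies in $V$.

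Two points need tidying. First, as written $T$ is a proper class, since $\nu$ ranges over all of $V^{\PP}$; before invoking Solovay's theorem you must cut it down to a set, e.g.\ by restricting to nice names for structures with domain $\omega$ (harmless, since $\Vdash\mathrm{dom}(\nu)=\omega$ may be assumed). Second, comparing $\Vdash^{V[H]}$ with $\Vdash^{V[H']}$, and speaking of an $H''$ mutually generic with both, presupposes a common ambient model in which $H$, $H'$, $H''$ all live; this is legitimate under the paper's conventions (Remark \ref{remark on copies}), or if you introduce each new generic as generic over the extension already built, but it should be stated, since arbitrary generic extensions of $V$ need not amalgamate.
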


If $\PP , \mathcal{N}$, and $\nu$ are as above, we say that $\mathcal{N}$ is {\em generically presentable by $\PP $ via $\nu$}. 

\begin{proof} This is a generalization of the first paragraph of the proof of Solovay's Theorem (Theorem \ref{Solovay}).  Fix $H_0$ $\PP $-generic over $V$. Since $\mathcal{N}$ is generically presentable, there is a name $\mu$ such that $\mu[H_0]\cong \mathcal{N}$. Looking at the product forcing $\PP ^2$, there exist a condition $p\in H_0$ and a name $\nu$ such that \[(p, 1)\Vdash_{\PP ^2}\mu[\gl]\cong\nu[\gr].\]

We claim that $\nu$ names a copy of $\mathcal{N}$ in all generic extensions. To see this, fix a $\PP $-generic filter $G_1$.  We can find $G_0, H_1$ such that 
\begin{itemize}
\item $p\subseteq G_0$ and $G_0$ is $\PP $-generic over $V[G_1]$,
\item $H_1$ is $\PP $-generic over $V[H_0]$, and
\item $G_0\times H_1$ is $\PP ^2$-generic over $V$. 
\end{itemize}
Then we have 
\[
\nu[G_1]\cong\mu[G_0]\cong\nu[H_1]\cong\mu[H_0]\cong\mathcal{N}.
\qedhere
\] 
\end{proof}

Finally, although it is not directly useful to the results of this paper, we note that being ``generically generically presentable" is the same as being generically presentable.  

\begin{prop} If $\mathcal{A}$ is generically presentable over $V[G]$ for every $\PP $-generic filter $G$, then $\mathcal{A}$ is generically presentable over $V$.
\end{prop}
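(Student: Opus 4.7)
Plan: The strategy is to build, in $V$, a single iterated forcing $\PP*\dot{\mathbb{Q}}$ that generically presents $\A$. For any $\PP*\dot{\mathbb{Q}}$-generic $K$ over $V$, we have $K=G*H$ with $G$ being $\PP$-generic over $V$ and $H$ being $\dot{\mathbb{Q}}[G]$-generic over $V[G]$, so $V[K]=V[G][H]$ will contain a copy of $\A$ provided $\dot{\mathbb{Q}}[G]$ witnesses the generic presentability of $\A$ over $V[G]$ for every $G$. Thus the whole task reduces to producing a single $\PP$-name $\dot{\mathbb{Q}}\in V$ with this uniformity property; Lemma \ref{Name}, applied in $V$ to $\PP*\dot{\mathbb{Q}}$, will then automatically yield a uniform name $\dot{\nu}$ for the copy.

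To produce $\dot{\mathbb{Q}}$, work externally in a universe containing $V$, $\A$, and all relevant generics, and call a pair $(p,\dot{\mathbb{Q}})$ \emph{suitable} if, for every $\PP$-generic $G$ containing $p$, the poset $\dot{\mathbb{Q}}[G]\in V[G]$ generically presents $\A$ over $V[G]$. The first key step is to show that the set of $p$ occurring in some suitable pair is dense in $\PP$: given $p\in\PP$, pick a generic $G\ni p$, use the hypothesis to get a witness $\mathbb{Q}_G\in V[G]$, fix any $\PP$-name $\dot{\mathbb{Q}}$ for it, and argue, by the product-forcing trick used to prove Lemma \ref{Name}, that some $q\leq p$ in $G$ makes $(q,\dot{\mathbb{Q}})$ suitable.

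Once density of suitable conditions is established, I would choose a maximal antichain $\{p_i:i\in I\}\subseteq \PP$ of conditions with associated witnesses $\dot{\mathbb{Q}}_i$, and use the standard mixing lemma to define a single $\PP$-name $\dot{\mathbb{Q}}$ that agrees with $\dot{\mathbb{Q}}_i$ on the cone below each $p_i$. Every $\PP$-generic $G$ meets this antichain at exactly one $p_i$, so $\dot{\mathbb{Q}}[G]=\dot{\mathbb{Q}}_i[G]$ generically presents $\A$ over $V[G]$. Therefore $\PP*\dot{\mathbb{Q}}\in V$ generically presents $\A$, and applying Lemma \ref{Name} in $V$ furnishes a single $\PP*\dot{\mathbb{Q}}$-name $\dot{\nu}$ witnessing this.

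The main obstacle is the density of suitable pairs: the clause ``$\dot{\mathbb{Q}}$ generically presents $\A$'' is not a statement about objects of $V$, since $\A$ may fail to live in $V$, so the usual forcing theorem does not directly supply a condition forcing it. The resolution is precisely the product-forcing argument from Lemma \ref{Name}: two $\PP$-generics $G_0,G_1$ can be combined with a further generic to exhibit the $V[G_0]$-side and $V[G_1]$-side copies of $\A$ in a common extension, and the isomorphism between them in that common extension forces the required uniformity back down to a single $\PP$-name, relative to a fixed reference generic $G_0$.
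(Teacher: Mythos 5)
Your argument is correct and follows essentially the same route as the paper: the paper's proof simply declares itself to be the ``two-step iteration version'' of Lemma \ref{Name}, working with a single $\PP$-name for the pair $\langle \mathbb{Q},\mu\rangle$, and your construction of the uniform name $\dot{\mathbb{Q}}$ --- density of suitable conditions via the product-forcing trick, then a maximal antichain and mixing --- is precisely an unpacking of that. The only cosmetic difference is that you recover the name for the copy by applying Lemma \ref{Name} to $\PP*\dot{\mathbb{Q}}$ at the end rather than carrying the $\mathbb{Q}$-name $\mu$ along with $\mathbb{Q}$ from the start.
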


\begin{proof} 
The proof is identical to the proof of Lemma \ref{Name} above, except that instead of a name $\nu$ for $\A$, we work with a name $\hat{\nu}$ for a pair $\langle \mathbb{Q}, \mu\rangle$ where $\mathbb{Q}$ is a poset and $\mu$ is a $\mathbb{Q}$-name; this is just the ``two-step iteration" version of Lemma \ref{Name}.
\end{proof}

\section{Generic presentability and $\omega_2$}

In this section and the next, we address the question ``when do  generically presentable structures have copies in $V$?" This section focuses on a forcing-theoretic aspect of the question.   For which forcing notions $\PP $ do we have copies in $V$ for all structures generically presentable by $\PP $ with universe $\omega$?  Surprisingly, this is entirely determined by how $\PP $ affects cardinals: $\om_2$ remains uncountable after forcing with $\PP $ if and only if every structure generically presentable by $\PP $ on $\omega$ has a copy in $V$.

As a consequence of proving the left-to-right direction of this result, we also give a new proof of the result due to Harrington that counterexamples to Vaught's conjecture must have models of arbitrarily high Scott rank in $\om_2$. The right-to-left direction follows from a construction of Laskowski and Shelah \cite{SL93}.  

\subsection{Scott Analysis.}     

We begin by reviewing the {\em Scott analysis} of a structure.  Scott \cite{Sco65} proved that for every countable structure $\A$, there is an infinitary sentence $\sigma$ of $L_{\omega_1\omega}$ such that the countable models of $\sigma$ are exactly the isomorphic copies of $\A$.  Such a sentence is called a \emph{Scott sentence}. 

There are several definitions of \emph{Scott rank} in the literature (see, in particular, \cite{Bar75}, \cite{AK00}, \cite{SM}, \cite{CKM}, and \cite{MonSR}).  The definitions give slightly different values.  However, all of the definitions assign countable Scott ranks to countable structures.  In general, the complexity of the Scott sentence is only a little greater than the Scott rank of the structure.  If one definition assigns a computable ordinal Scott rank, then the other definitions do as well, and then there is a Scott sentence that is $\Sigma_\alpha$, for some computable ordinal $\alpha$.  The definition that we give below is the one used by Sacks \cite{Sac07}.  We begin by defining a family of definable expansions of~$\A$.       

\begin{definition}
\label{Scott: predicate iteration}

We first have to review the Scott analysis of a structure. 
(See \cite{Sac07}.)
For each $\a$, we define a fragment $\Lan^\A_\a$ of $L_{\infty,\om}$ as follows:
\begin{itemize}
\item Let $\Lan^\A_0$ consists of the first order formulas.
\item Given $\Lan^\A_\a$, for each complete non-principal type $\Phi(x)\subseteq \Lan^\A_\a$ realized in $\A$, add the formula $\bigwedge \Phi(x)$ to $\Lan^\A_{\a+1}$, and close under first order connectives and quantifiers.
\item At limit levels, take unions. 
\end{itemize}
For each $\a$ there is a natural way to expand $\mathcal{A}$ to a $\Lan^\A_\a$-structure $\mathcal{A}_\a$; we will abuse notation by omitting the subscript, since no confusion will arise.
\end{definition} 

At some step $\a$, $\A$ becomes $\Lan^\A_\a$-atomic, in the sense that all $\Lan^\A_\a$-types are principal.

\begin{definition} \label{def:Scott rank}
The \emph{Scott rank} of $\A$, $sr(\A)$, is the least ordinal $\a$ such that $\A$ is an $\Lan^\A_\a$-atomic structure.
\end{definition} 

\begin{lemma}\label{presenting Scott analysis}
If $\A$ is generically presentable, then, for every ordinal $\b$, $\Lan^\A_\b\in V$.
\end{lemma}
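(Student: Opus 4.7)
The plan is a straightforward transfinite induction on $\beta$, carried out entirely inside $V$, using Lemma~\ref{Name} to pin down $\A$ by a single name. By Lemma~\ref{Name} fix $\nu \in V^{\PP}$ with $\nu[G] \cong \A$ for every $\PP$-generic $G$; the whole argument will hinge on the fact that realizing a complete type is an isomorphism invariant, so that questions of the form ``is $\Phi(x)$ realized in $\A$?'' can be reformulated inside $V$ as forcing-theoretic questions about $\nu$.

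For the base case, $\Lan^\A_0$ is just the set of first-order formulas in the signature of $\A$, which lies in $V$ outright. For the limit case, if the sequence $\langle \Lan^\A_\alpha : \alpha < \beta \rangle$ is in $V$ (see below), then $\Lan^\A_\beta = \bigcup_{\alpha < \beta} \Lan^\A_\alpha$ is in $V$ as well. The successor step is the heart of the matter: assuming $\Lan^\A_\alpha \in V$, I want to show that the set
\[
R_\alpha := \{\Phi(x) \subseteq \Lan^\A_\alpha : \Phi \text{ is a complete non-principal type realized in } \A\}
\]
lies in $V$. Since $\Lan^\A_\alpha \in V$, the collection of all complete types over $\Lan^\A_\alpha$ is a set in $V$, and for each such $\Phi$ the statement ``$\exists x\, \bigwedge \Phi(x)$'' is a formula of $\Lkarp$ in $V$. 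Because isomorphic structures realize the same $\Lkarp$-types, $\Phi$ is realized in $\A$ if and only if $\Phi$ is realized in $\nu[G]$ for some (equivalently, every) $\PP$-generic $G$, i.e.\ if and only if $1_\PP \Vdash \exists x\, \bigwedge \Phi(x)$ holds in the name $\nu$. The forcing relation is definable in $V$, so $R_\alpha \in V$, and then $\Lan^\A_{\alpha+1}$ is defined from $\Lan^\A_\alpha$ and $R_\alpha$ by closing under the propositional connectives, quantifiers, and adjoining the conjunctions $\bigwedge \Phi(x)$ for $\Phi \in R_\alpha$ — all performed inside $V$.

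To make the limit stage go through cleanly, I would phrase the induction so as to produce, inside $V$, the whole sequence $\langle \Lan^\A_\alpha : \alpha \le \beta \rangle$ rather than just the single set $\Lan^\A_\beta$. The construction of $\Lan^\A_{\alpha+1}$ from $\Lan^\A_\alpha$ and $\nu$ is a definable operation in $V$ (via the forcing relation), so transfinite recursion in $V$ yields such a sequence for every ordinal $\beta$; evaluating at $\beta$ gives the desired conclusion.

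The main obstacle is the one already highlighted: verifying that ``$\Phi$ is realized in $\A$'' really is equivalent to a statement in the forcing language about $\nu$ and a set-sized collection of parameters from $V$. This requires (a) that $\Phi \in V$, which is exactly what the inductive hypothesis $\Lan^\A_\alpha \in V$ supplies, and (b) that the sentence $\exists x \bigwedge \Phi(x)$ makes sense as an $\Lkarp$-sentence in $V$ and is absolute between $V$ and $V[G]$ for isomorphic copies of $\A$ — which is Karp's theorem. Once these are in place, the forcing definability of $R_\alpha$, and hence of $\Lan^\A_{\alpha+1}$, is routine.
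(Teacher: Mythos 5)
Your proof is correct, but it reaches the conclusion by a genuinely different route from the paper's. The paper argues that, once infinitary conjunctions are coded by \emph{sets} of conjuncts rather than sequences, the fragment $\Lan^\A_\b$ is (by induction on $\b$) a single set that is independent of the presentation of $\A$; it therefore lies in every $\PP$-generic extension, and Solovay's Theorem (Theorem \ref{Solovay}) for generically presentable \emph{sets} then places it in $V$. You instead fix a single name $\nu$ via Lemma \ref{Name} and run the transfinite recursion inside $V$, using the definability of the forcing relation to identify, at each successor step, the set $R_\alpha$ of complete non-principal types realized in $\A$. In effect you have inlined the proof of Solovay's Theorem --- which itself proceeds by fixing a name and reading the set off the forcing relation --- into the recursion. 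Both arguments turn on the same key observation, namely that the collection of realized types, viewed as a set of sets of formulas, is an isomorphism invariant and hence presentation-independent; note that you use the set-coding of conjunctions implicitly when you form $\bigwedge\Phi(x)$ from the set $\Phi\in R_\alpha$, a point the paper makes explicit. Factoring through Theorem \ref{Solovay} buys the paper brevity and the reuse of an already-proved black box; your version buys an explicit construction of $\Lan^\A_\b$ inside $V$, at the cost of invoking forcing definability and the absoluteness of $\Lkarp$-satisfaction directly.
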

\begin{proof}
First, let us remark that we can code the formulas in $\Lan^\A_\b$ by sets:
for instance, we code an infinitary conjunction of formulas $\psi_i$ by a pair, the first element being a code that means ``conjunction'' and the second element being the set of codes for the formulas $\psi_i$ --- say, by defining $code(\bigwedge_{i\in I} \psi_i(x)) = \la 17, \{code(\psi_i(x)):i\in I\}\ra$. This is quite standard, so we let the reader fill in the details.

The one important detail is that we are not coding infinitary conjunctions using sequences of formulas, but using sets where the order of the formulas does not matter. 
The key point is that if we have different presentations of a structure $\A$, the types realized in each presentation are the same as sets.
We can then prove by induction on $\b$, that $\Lan^A_\b$ is a set that is independent of the presentation of $\A$.
Since $\A$ is generically presentable, say by a forcing notion $\PP$, the language $\Lan^A_\b$ belongs to all $\PP$-forcing extensions of $V$, and so by Solovay's Theorem \ref{Solovay}, we get that $\Lan^A_\b$ belongs to $V$.
\end{proof}

\begin{definition}\label{Ahat}
Given a structure $\A$, let $\hat{\mathcal{L}}$ be the language containing a relation symbol for each formula in $\Lan^\A_{sr(\A)}$ (the {\em Morleyization} of $\Lan^\A_{sr(\A)}$), and let $\Ahat$ be the natural expansion of $\A$ to the language $\Lhat$. Note that if $\A$ is generically presentable, then $\Lhat\in V$ since $\Lan^\A_{sr(\A)}\in V$.
\end{definition}

Notice that $\Ahat$ is atomic in a very strong way: each $\Lhat$-type is generated by a quantifier-free $\Lhat$-formula.


\begin{remark} Throughout this section we will tacitly assume that $\mathcal{L}$ (and hence $\Lhat$ as well) is no larger than $\A$; that is, that the statement ``$\vert\mathcal{L}\vert\le\vert\A\vert$" is true in every forcing extension by $\mathbb{P}$ (where $\mathbb{P}$ is a forcing generically presenting $\A$). This assumption is used, for example, in \ref{scottom1} below, and is necessary for straightforwardly applying the facts about amalgamation we will prove in section \ref{Ages}. Note that this assumption holds for the vast majority of natural structures.
\end{remark}


\begin{lemma}\label{le: Ahat}
If $\A$ is generically presentable, then so is $\Ahat$.
\end{lemma}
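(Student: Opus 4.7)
The plan is to show that the same forcing notion $\PP$ that generically presents $\A$ also generically presents $\Ahat$, by lifting a uniform name for $\A$ to a uniform name for $\Ahat$. First I would invoke Lemma \ref{Name} to fix a name $\nu\in V^{\PP}$ such that $\nu[G]\cong\A$ for every $\PP$-generic $G$, and invoke Lemma \ref{presenting Scott analysis} to conclude that $\Lan^{\A}_{sr(\A)}\in V$, and hence $\Lhat\in V$ as well.

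The key step is to define, inside $V$, a $\PP$-name $\hat\nu$ for the $\Lhat$-expansion of $\nu[G]$ where each relation symbol $R_\varphi$ is interpreted as the satisfaction set of $\varphi$. Concretely, since $\nu[G]$ has domain $\omega$ and the coded formulas of $\Lan^{\A}_{sr(\A)}$ live in $V\subseteq V[G]$, one can carry out transfinite recursion on the complexity of $\varphi$ inside $V[G]$ to compute $\{\bar b\in\omega^{<\omega}:\nu[G]\models\varphi(\bar b)\}$; this recursion is uniformly definable from $\nu$ and $\Lan^{\A}_{sr(\A)}$, so there is a single name $\hat\nu\in V^{\PP}$ which in each $V[G]$ names precisely this expansion.

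It remains to verify that $\hat\nu[G]\cong\Ahat$ for each $\PP$-generic $G$. Any isomorphism $f\colon\nu[G]\to\A$ (which lives in $W[G]$, where $W$ is the extension of $V$ in which $\A$ lives) preserves and reflects every $\Linf$ formula, so in particular every $\varphi\in\Lan^{\A}_{sr(\A)}$. Therefore for every such $\varphi$ and every tuple $\bar b$ from $\nu[G]$,
\[
\bar b\in R_\varphi^{\hat\nu[G]}\iff\nu[G]\models\varphi(\bar b)\iff\A\models\varphi(f(\bar b))\iff f(\bar b)\in R_\varphi^{\Ahat},
\]
so $f$ itself witnesses $\hat\nu[G]\cong\Ahat$ in $W[G]$. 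This gives generic presentability of $\Ahat$ by $\PP$ via $\hat\nu$.

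I do not expect a serious obstacle here: the only slightly delicate point is making sure that the assignment $G\mapsto\hat\nu[G]$ is genuinely given by a single name in $V$, which reduces to the fact that $\Lan^{\A}_{sr(\A)}$ and the satisfaction predicate for $\Linf$-formulas on a countable structure are absolutely definable from their parameters. Once that is noted, the back-and-forth preservation of $\Linf$-formulas under isomorphism does the rest.
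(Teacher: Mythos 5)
Your proposal is correct and follows essentially the same route as the paper: both rest on Lemma \ref{presenting Scott analysis} to get $\Lhat\in V$ and on the fact that the expansion of a copy of $\A$ to an $\Lhat$-structure is canonical (isomorphism-invariant and definable inside $V[G]$ from the copy and the language), so the expansion of $\nu[G]$ is a copy of $\Ahat$ in $V[G]$. The paper's proof simply states this uniqueness of the expansion without constructing the name $\hat\nu$ explicitly; your extra detail about the name and the preservation of $\Linf$-formulas under isomorphism is a harmless elaboration of the same argument.
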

\begin{proof}
We already showed that $\Lan^\A_{sr(\A)}\in V$, so $\Lhat\in V$.
There is only one way to expand $\A$ to the $\Lhat$-structure $\Ahat$.
So, $\Ahat$ has a presentation with domain $\om$ in every generic extension of $V$ where $\A$ does.
\end{proof}

\begin{prop}\label{scottom1} 
Suppose $\A$ is generically presentable by a forcing notion that does not collapse $\om_1$.  Then $\A$ has a copy in $V$ with domain $\om$.
\end{prop}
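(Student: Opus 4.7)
The plan is to exploit the Scott analysis of Definitions \ref{Scott: predicate iteration}--\ref{Ahat} and argue that, under the hypothesis that $\PP$ does not collapse $\om_1$, the atomic expansion $\Ahat$ can be reconstructed from data living inside $V$. By Remark \ref{remark on copies} I may assume $W = V[G]$ for some $\PP$-generic $G$, with $\A \in V[G]$ having domain $\om$. The first step would be to show that $\Lhat \in V$ is countable in $V$. By Lemma \ref{presenting Scott analysis}, $\Lhat \in V$, and an induction on the Scott analysis shows that in $V[G]$ each $\Lan^\A_\a$ is countable for every countable $\a$ (since $\A$ realizes only countably many types at each stage), while $sr(\A)$ is itself a countable ordinal in $V[G]$. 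Hence $\Lhat$ is countable in $V[G]$. Since $\PP$ preserves $\aleph_1^V$, any set of cardinality $\geq \aleph_1^V$ in $V$ remains uncountable in $V[G]$, so $\Lhat$ must already be countable in $V$.

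Next I would argue that $T := Th_{\Lhat}(\Ahat)$ lies in $V$. By Lemma \ref{le: Ahat}, $\Ahat$ is generically presentable by $\PP$, and by Lemma \ref{Name} we can fix a $\PP$-name $\nu$ with $\nu[G] \cong \Ahat$ for every $\PP$-generic $G$. For each $\Lhat$-sentence $\phi$, absoluteness of first-order satisfaction between isomorphic structures gives either $\Vdash_\PP \nu[\dot G] \models \phi$ or $\Vdash_\PP \nu[\dot G] \models \neg \phi$, so $T = \{\phi \in \Lhat : \Vdash_\PP \nu[\dot G] \models \phi\}$ is a $V$-definable complete $\Lhat$-theory. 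Because $\Ahat$ is first-order atomic in $\Lhat$, $T$ is an atomic theory in $V[G]$; since atomicity depends only on the sets $T$ and $\Lhat$, both of which lie in $V$, the theory $T$ is atomic in $V$ as well.

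Inside $V$, the classical Henkin construction of a countable atomic model now produces a countable atomic $\Lhat$-model $\mathcal{B} \models T$ with domain $\om$; this is the step that requires $\Lhat$ to be countable in $V$. In $V[G]$, both $\mathcal{B}$ and $\Ahat$ are countable atomic models of $T$, hence isomorphic by uniqueness of the countable atomic model. The $\L$-reduct of $\mathcal{B}$ is then the desired copy of $\A$ inside $V$ with domain $\om$, the witnessing isomorphism to $\A$ living in $V[G]$ as prescribed by Remark \ref{remark on copies}.

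The main obstacle is the very first step: without the non-collapse hypothesis, $\Lhat$ (and hence $T$) could have genuinely uncountable cardinality in $V$, the atomic model construction would not be available inside $V$, and the entire transfer argument would break down. Once countability of $\Lhat$ in $V$ is secured, the rest of the argument is a routine transfer between $V$ and $V[G]$ using the absoluteness of the first-order theory $T$ and of the property ``$T$ is atomic.''
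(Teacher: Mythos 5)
Your proof is correct and follows essentially the same route as the paper: use non-collapse of $\om_1$ to get $\Lhat$ countable in $V$, put $Th_{\Lhat}(\Ahat)$ in $V$, produce a countable atomic model of it in $V$, and invoke uniqueness of the countable atomic model in $V[G]$. The only (harmless) difference is at the existence step, where the paper transfers the $\Sigma^1_1$ statement ``$T$ has a countable atomic model'' down to $V$ by absoluteness, while you instead transfer the arithmetic statement ``the principal types of $T$ are dense'' and run the classical atomic-model construction inside $V$.
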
   
\begin{proof}
Let $\PP$ be a forcing notion that does not collapse $\om_1$, and for which $\A$ is generically presentable.
Since $\Ahat$ is generically presentable, and $\Lhat\in V$, we have that $Th_\Lhat(\Ahat)$, the $\Lhat$-theory of $\Ahat$, is a set of $\Lhat$ sentences that belongs to all $\PP$-generic exensions.  Thus, $Th_\Lhat(\Ahat)\in V$.

In all of these extensions, $\Lhat$ is countable (because $\A$ is), and, hence, $\Lhat$ cannot be uncountable in $V$.
Otherwise, there would be an injection from $\om_1$ into $\Lhat$, and since $\PP$ does not collapse $\om_1$, $\Lhat$ would stay uncountable in $V[G]$.

Now, in  each of these generic extensions, $\Ahat$ is the unique countable atomic model of $Th_\Lhat(\Ahat)$.
The existence of such a model is a $\Si^1_1$ statement with $Th_\Lhat(\Ahat)$ as parameter. 
By absoluteness, this must be true in $V$ too, and by the uniqueness of $\Ahat$ in $V[G]$, this model must be isomorphic to $\Ahat$.
\end{proof}

\subsection{Keeping $\om_2$ uncountable.}\label{Ages}

We now turn to the {\em Fra\"iss\'e limit} construction, first used in \cite{Fra00}:

\begin{definition} Fix a relational language $\mathcal{L}$. For an $\mathcal{L}$-structure $\B$, we denote by $\bfK_\B$ the set of finite substructures of $\B$, and we call $\bfK_\B$ the {\em age} of $\B$. For $\bfK$ a set of finite structures and $\A$ a structure, we say that $\A$ is the {\em Fra\"iss\'e limit} of $\bfK$ if $\bfK_\A=\bfK$ and the set of isomorphisms between finite substructures of $\A$ has the back-and-forth property.
\end{definition}

It is clear from the definition that if $\A$ and $\B$ are countable Fra\"iss\'e limits for the same age $\bfK$, then $\A\cong\B$.  A given age may have non-isomorphic uncountable Fra\"iss\'e limits.  For example, if $\bfK$ is the set of finite linear orderings, the Fra\"iss\'e limits are the dense linear orderings without endpoints, and there are many --- in fact, $2^{\aleph_1}$  many, the most possible --- non-isomorphic ones of cardinality $\aleph_1$.      

\begin{lemma}\label{presenting ages} If $\A$ is generically presentable, then $\bfK_\A\in V$.
\end{lemma}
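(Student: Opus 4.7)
The plan is to apply Solovay's Theorem~\ref{Solovay}. Since $\A$ need not live in $V$, I would first reinterpret $\bfK_\A$ in an isomorphism-invariant way, taking it to be the set of all finite $\L$-structures whose domain is a finite subset of $\omega$ and which embed into $\A$. Because embeddability of finite structures depends only on the isomorphism type of the target, this is a well-defined invariant of $\A$, and it agrees with the paper's definition up to the standard identification of finite substructures with their isomorphism types on finite initial segments of $\omega$.

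By Lemma~\ref{Name} I would fix a name $\nu\in V^\PP$ with $\nu[G]\cong\A$ for every $\PP$-generic $G$. The language $\L$ of $\nu[G]$ is an isomorphism-invariant of $\A$ and the same set in every extension, so Solovay gives $\L\in V$; consequently the ambient set $\mathrm{Fin}(\L)$ of all finite $\L$-structures on finite subsets of $\omega$ is in $V$ as well. Working in each $V[G]$, set
\[
\bfK_G := \{\C\in\mathrm{Fin}(\L) : \C\hookrightarrow \nu[G]\}.
\]
For any two $\PP$-generics $G$ and $G'$ one can find a common larger extension (via product forcing) in which both filters appear, and there $\nu[G]\cong\A\cong\nu[G']$; since embeddability is isomorphism-invariant, $\bfK_G = \bfK_{G'}$. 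Thus a single subset $\bfK\subseteq\mathrm{Fin}(\L)$ lies in every $V[G]$, and Solovay's theorem yields $\bfK\in V$. By construction $\bfK = \bfK_\A$.

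The only real obstacle is the preliminary bookkeeping: setting up $\bfK_\A$ as an isomorphism-invariant set of coded finite structures, so that it makes sense to say it is ``the same'' in every $V[G]$ and thereby satisfies Solovay's hypothesis. Once that identification is in place, the argument is essentially a simpler version of the proof of Lemma~\ref{presenting Scott analysis}, and no further work is needed.
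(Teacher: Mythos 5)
Your proof is correct and follows the same route as the paper: the paper's entire argument is the observation that $\bfK_\A$ is a set of finite structures independent of the presentation of $\A$, so Solovay's Theorem \ref{Solovay} applies. You have simply spelled out the bookkeeping (coding the age as an isomorphism-invariant subset of a ground-model set of finite $\L$-structures, and checking it is the same set in every $V[G]$) that the paper leaves implicit.
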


\begin{proof} 
This follows from Solovay's Theorem \ref{Solovay}: $\bfK_\A$ is a set of finite structures that is independent of the presentation of $\A$.
\end{proof}

Using the same argument as in Proposition \ref{scottom1}, we get a bound on the size of $\hat{\mathcal{L}}$ and $\bfK_{\Ahat}$:

\begin{corollary}\label{anothersizelemma} If $\A$ is generically presentable by a forcing not making $\om_{2}$ countable, then $\hat{\mathcal{L}}$ and $\bfK_{\Ahat}$ have size $\leq\aleph_1$ in $V$.
\end{corollary}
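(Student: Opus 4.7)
The plan is to carry out the same strategy as in Proposition~\ref{scottom1}, but with $\om_2$ playing the role that $\om_1$ played there. The two inputs are already in place: by Lemma~\ref{presenting Scott analysis} the language $\Lan^\A_{sr(\A)}$, and hence $\hat{\mathcal{L}}$, belongs to $V$; by Lemma~\ref{presenting ages} the age $\bfK_\Ahat$ (which agrees with $\bfK_\A$ up to the expansion) is a set in $V$. So I may quantify over their cardinalities in $V$.

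First I would handle $\hat{\mathcal{L}}$. Let $\PP$ be the forcing witnessing the generic presentability of $\A$, and suppose for contradiction that $|\hat{\mathcal{L}}|^V \geq \aleph_2$; fix an injection of $\om_2^V$ into $\hat{\mathcal{L}}$ in $V$. Let $G$ be $\PP$-generic over $V$. On the one hand, by the hypothesis on $\PP$, the ordinal $\om_2^V$ remains uncountable in $V[G]$, so $\hat{\mathcal{L}}$ is uncountable in $V[G]$. On the other hand, in $V[G]$ the structure $\A$ (and hence $\Ahat$) has a copy with domain $\om$, and by the remark preceding the corollary we may assume $|\hat{\mathcal{L}}| \leq |\Ahat|$ in every $\PP$-extension, forcing $\hat{\mathcal{L}}$ to be countable in $V[G]$. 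This contradiction gives $|\hat{\mathcal{L}}|^V \leq \aleph_1$.

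Next I would do the same for $\bfK_\Ahat$. Suppose for contradiction that $|\bfK_\Ahat|^V \geq \aleph_2$. The same preservation argument shows that $\bfK_\Ahat$ is uncountable in $V[G]$. But in $V[G]$, $\Ahat$ has a presentation with domain $\om$ over the (now countable) language $\hat{\mathcal{L}}$, and every element of $\bfK_\Ahat$ is the substructure generated by some finite tuple from that domain; hence $\bfK_\Ahat$ has at most $\aleph_0$ elements in $V[G]$, again a contradiction. So $|\bfK_\Ahat|^V \leq \aleph_1$.

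There is no real obstacle here: the construction is a direct transposition of the $\om_1$-argument, and the only thing to double-check is that in $V[G]$ the bound on $\bfK_\Ahat$ really is $\aleph_0$, which follows once we remember that $\hat{\mathcal{L}}$ itself is countable in $V[G]$ (so that each finite subdomain of $\Ahat$ supports only countably many $\hat{\mathcal{L}}$-structure types, although in fact a single tuple determines its substructure uniquely).
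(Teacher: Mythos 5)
Your proposal is correct and is exactly the argument the paper intends: it explicitly says the corollary follows ``using the same argument as in Proposition~\ref{scottom1},'' namely that $\Lhat$ and $\bfK_{\Ahat}$ are sets in $V$ (by Lemmas~\ref{presenting Scott analysis} and~\ref{presenting ages}) which become countable in a $\PP$-generic extension, so any injection of $\om_2^V$ into either of them would contradict the hypothesis that $\om_2^V$ stays uncountable. The only quibble is your parenthetical claim that a finite subdomain supports only countably many $\Lhat$-structure types --- false in general for a countably infinite relational language --- but this is harmless since you immediately give the correct reason, namely that a finite substructure of $\Ahat$ is determined by its (finite) domain.
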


Fra\"iss\'e \cite{Fra00} proved that if $\bfK$ is a countable set of finite structures satisfying the Hereditary Property ($HP$), the Joint Embedding Property ($JEP$) and the Amalgamation Property ($AP$), then it has a Fra\"iss\'e limit (see 6.1 of \cite{shorter} for definitions). The next lemma says that this is still the case when $\bfK$ has size $\aleph_1$.  The earliest reference we know is Delhomme, Pouzet, Sagi, and Sauer \cite[Corollary 2, p.\ 1378]{DPSS}.  We give the proof because we want to make clear that the result does not automatically generalize to ages of size $>\aleph_1$; and indeed, we will see in the next subsection that there is an age of size $\aleph_2$ with no limit (\ref{nobiglimit}).    

\begin{lemma}
\label{lem: Fraisse aleph 1}
Let $\bfK$ be a family of $\aleph_1$ finite structures on a relational language $\L$ of size $\leq \aleph_1$. If $\bfK$ has HP, JEP, and AP, then there is a Fra\"iss\'e limit $\A$ with age $\bfK$.
\end{lemma}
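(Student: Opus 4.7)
The plan is a transfinite Fra\"iss\'e-style construction of length $\om_1$: I build a continuous $\subseteq$-chain $(A_\alpha)_{\alpha<\om_1}$ of countable $\L$-structures, each of whose finite substructures lies in $\bfK$, and take $\A=\bigcup_{\alpha<\om_1}A_\alpha$. Because every $\alpha<\om_1$ is itself countable, the $A_\alpha$'s remain countable at all stages, while $|\A|=\aleph_1$. At each successor stage I handle one \emph{extension task} $(B,C,g)$ consisting of a finite substructure $B$ of some earlier $A_\beta$, a structure $C\in\bfK$, and an embedding $g\colon B\hookrightarrow C$; handling the task means producing $A_{\alpha+1}\supseteq A_\alpha$ together with an embedding $\hat g\colon C\hookrightarrow A_{\alpha+1}$ extending $g$. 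Concretely, write $A_\alpha=\bigcup_n F_n$ as an increasing union of finite substructures in $\bfK$ with $B\subseteq F_0$, let $D_0$ be an AP-amalgam of $F_0$ and $C$ over $B$, let $D_{n+1}$ be an AP-amalgam of $F_{n+1}$ and $D_n$ over $F_n$, and take $A_{\alpha+1}$ to be the direct limit $\bigcup_n D_n$. By HP every finite substructure of $A_{\alpha+1}$ lies in some $D_n\in\bfK$ and hence in $\bfK$.

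For the bookkeeping, once $A_\beta$ is built the set of admissible tasks with $B\subseteq A_\beta$ has cardinality at most $\aleph_0\cdot|\bfK|\cdot\aleph_0=\aleph_1$, so I fix an enumeration $\{T^\xi_\beta:\xi<\om_1\}$ of it (with repetitions if necessary) at the moment $A_\beta$ is defined. Choose in advance a bijection $\pi\colon\om_1\to\om_1\times\om_1$ satisfying $\pi_0(\alpha)\leq\alpha$ for all $\alpha$---the canonical well-order of $\om_1\times\om_1$ by maximum, then lexicographically, works. At successor stage $\alpha\to\alpha+1$ address $T^{\pi_1(\alpha)}_{\pi_0(\alpha)}$, and at limits take unions. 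Since $\pi$ is a bijection, for every $\beta<\om_1$ and every $\xi<\om_1$ there is some $\alpha\geq\beta$ with $\pi(\alpha)=(\beta,\xi)$, so every task is eventually handled.

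To verify that $\A$ is a Fra\"iss\'e limit of $\bfK$: first, $\bfK_\A=\bfK$, since every finite substructure of $\A$ lies inside some $A_\alpha$ and hence in $\bfK$, while every $K\in\bfK$ embeds into $\A$ via the trivial task $(\emptyset,K,\emptyset)$. Second, given an isomorphism $f\colon B_1\to B_2$ between finite substructures of $\A$ and an element $c\in\A$, the structure $B_1\cup\{c\}$ belongs to $\bfK$; let $C$ be the $\L$-structure on $B_2\cup\{c^*\}$ (with $c^*$ fresh) making $f\cup\{(c,c^*)\}$ an isomorphism from $B_1\cup\{c\}$. Then $C\in\bfK$, $B_2\subseteq C$ is a substructure, and the task $(B_2,C,\text{incl})$ is eventually addressed, giving an embedding $C\hookrightarrow\A$ whose value at $c^*$ is the required witness $d$. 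The main obstacle is organizational: tasks at stage $\beta$ depend on the not-yet-built $A_\beta$, so the enumerations $\{T^\xi_\beta\}$ must be set up dynamically, and $\pi$ must be chosen so that every pair $(\beta,\xi)$ is visited at some $\alpha\geq\beta$. The hypotheses $|\bfK|,|\L|\leq\aleph_1$ are exactly what keep all enumerations within $\om_1$, and the argument does not push further: as the next subsection will show, there are ages of size $\aleph_2$ with no Fra\"iss\'e limit.
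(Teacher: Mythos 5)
Your proposal is correct and follows essentially the same route as the paper: your successor-stage construction (iteratively amalgamating the finite pieces $F_n$ of the countable $A_\alpha$ with $C$ over $B$ and taking the direct limit) is precisely the paper's Claim, and your length-$\om_1$ continuous chain with extension tasks is the paper's sequence $(\A_\xi)_{\xi\in\om_1}$. You merely make the $\aleph_1$-bookkeeping of tasks explicit where the paper leaves it implicit.
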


\begin{proof}
The key is the following:
\begin{quote}
{\em Claim:} Suppose we have embeddings $\A\to\B$ and $\A\to \C$ where $\A,\B\in \bfK$ and $\C$ is countable and its age is a subset of $\bfK$. Then there is a countable structure $\D$, whose age is a subset of $\bfK$, and which amalgamates these embeddings.
\end{quote}
To prove the claim, write $\C$ as the union of an increasing sequence $\{\C_n:n\in\om\}$ where each $\C_n\in\bfK$, and with $\C_0=\A$.
Let $\D_0=\B$, and note that we have an embedding from $\C_0$ to $\D_0$. Given $\D_n$, by induction we will have an embedding from $\C_n$ into $\D_n$, and $\D_n$ will be an element of $\bfK$; and by definition we have an embedding from $\C_n$ into $\C_{n+1}$. We then form $\D_{n+1}$ by amalgamating the embeddings $\C_n\rightarrow\C_{n+1}$ and $\C_n\rightarrow \D_n$ within $\bfK$. The direct limit $\D$ of the $\D_i$ is the desired amalgamation.

Now we prove the lemma. Suppose $\bfK$ is such a family of finite structures. There is a sequence $(\A_\xi)_{\xi\in\om_1}$ of structures such that:
\begin{itemize}
\item $\xi_0<\xi_1\implies \A_{\xi_0}\subseteq\A_{\xi_1}$;
\item each $\A_\xi$ is countable and its age is a subset of $\bfK$; and
\item for every $\xi\in\om_1$ and $\B,\C\in\KK$ and every pair of embeddings $\B\to\C$ and $\B\to\A_\xi$, there is $\g>\xi$ and an embedding $\C\to\A_\g$ compatible with the inclusion $\A_\xi\to\A_\g$.
\end{itemize} The union $\A$ of the $\A_\xi$ clearly has age $\bfK$.  It is clear from the construction that the set of finite partial isomorphisms has the back-and-forth property.  
\end{proof}

Note that the limit $\mathcal{A}$ constructed above need not be $\aleph_1$-homogeneous or unique.

\begin{corollary}\label{presenting ultrahomogeneous}  
Let $\B$ be an $\L$-structure that lives in an extension of the universe and is $\omega$-homogeneous in the sense that the family of isomorphisms between finite substructures has the back-and-forth property.  
Suppose $\B$ is generically presentable, and  $\vert\bfK_\B\vert,\vert\L\vert\leq \aleph_1$ in $V$. Then in $V$ there is a structure $\infty\omega$-equivalent to $\B$.
\end{corollary}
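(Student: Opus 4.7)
The plan is to build the desired structure inside $V$ as a Fra\"iss\'e limit of $\bfK_\B$, and then use potential isomorphism (Lemma \ref{potential isomorphism}) to conclude $\infty\om$-equivalence. By Lemma \ref{presenting ages}, since $\B$ is generically presentable, its age $\bfK_\B$ is a set of finite $\L$-structures that lies in $V$; combined with the hypothesis, this gives us $\bfK_\B$ and $\L$ of size at most $\aleph_1$ in $V$. So the obvious candidate is: verify that $\bfK_\B$ has HP, JEP, and AP, and then apply Lemma \ref{lem: Fraisse aleph 1} in $V$ to produce a structure $\A\in V$ with age $\bfK_\B$ and the back-and-forth property on finite substructures.

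First I would check the three Fra\"iss\'e conditions hold for $\bfK_\B$ (in $V$, but since $\bfK_\B\in V$, these combinatorial statements are absolute). HP is immediate from being an age. For JEP, given $\mathcal{C}_1,\mathcal{C}_2\in\bfK_\B$, embed them both into $\B$; the finite substructure of $\B$ generated by the union of the two images lies in $\bfK_\B$ and amalgamates. For AP, given a span $\mathcal{C}\to\mathcal{C}_1$, $\mathcal{C}\to\mathcal{C}_2$ in $\bfK_\B$, fix an embedding $g_1\colon \mathcal{C}_1\to\B$; the composite $g_1\circ(\mathcal{C}\to\mathcal{C}_1)$ is a particular embedding of $\mathcal{C}$ into $\B$, and the back-and-forth property of $\B$ on finite substructures lets us extend this embedding of $\mathcal{C}$ (identified with its image in $\mathcal{C}_2$) one point at a time to an embedding $g_2\colon \mathcal{C}_2\to\B$ making the square commute. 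The finite substructure generated by $g_1(\mathcal{C}_1)\cup g_2(\mathcal{C}_2)$ amalgamates inside $\bfK_\B$.

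Applying Lemma \ref{lem: Fraisse aleph 1} inside $V$ then yields a structure $\A\in V$ of cardinality at most $\aleph_1$ with $\bfK_\A=\bfK_\B$ and having the back-and-forth property on finite substructures. Now take any generic extension $V[G]$ in which both $\A$ and $\B$ are countable (such an extension exists since we can always collapse $\aleph_1$). In $V[G]$ both $\A$ and $\B$ are countable $\omega$-homogeneous structures with the same age $\bfK_\B$, so the classical Fra\"iss\'e uniqueness theorem (a simple back-and-forth argument using countability) gives $\A\cong\B$ in $V[G]$. By Lemma \ref{potential isomorphism}, this is exactly the statement that $\A\equiv_{\infty\om}\B$, which is what we wanted (and $\equiv_{\infty\om}$ is absolute, so this holds in $V$).

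The main thing to be careful about is the verification of AP inside $V$: one must ensure that the argument using the back-and-forth property of $\B$ (which a priori takes place in the extension where $\B$ lives) can be read as a statement about the set $\bfK_\B\in V$ of isomorphism types of finite substructures. Since the AP assertion quantifies only over elements of $\bfK_\B$ and finite embeddings between them, it is a statement about a set in $V$ and hence is absolute between $V$ and any extension. Everything else is routine: Lemma \ref{lem: Fraisse aleph 1} handles the $\aleph_1$-sized construction, and Lemma \ref{potential isomorphism} turns a countable isomorphism in a single extension into $\infty\om$-equivalence in $V$.
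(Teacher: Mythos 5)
Your proposal is correct and follows essentially the same route as the paper: extract $\bfK_\B\in V$ via Lemma \ref{presenting ages}, note that $\omega$-homogeneity gives HP, JEP, and AP (absolutely), build a Fra\"iss\'e limit in $V$ using Lemma \ref{lem: Fraisse aleph 1}, and conclude by uniqueness of countable Fra\"iss\'e limits in a collapsing extension together with the absoluteness of $\equiv_{\infty\omega}$. Your explicit verification of JEP and AP and your care in choosing an extension where \emph{both} structures become countable are details the paper glosses over, but the argument is the same.
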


\begin{proof}
Since $\B$ is generically presentable, we have that $\bfK_\B\in V$ by Lemma \ref{presenting ages}. Since $\B$ is $\omega$-homogeneous, $\bfK_\B$ has $HP$, $JEP$ and $AP$ in any model where $\B$ lives; in particular, $\bfK_\B$ has these properties in $V$. Since $\vert\bfK_\B\vert\leq \aleph_1$ and $\vert\L\vert\leq\aleph_1$ in $V$, by Lemma \ref{lem: Fraisse aleph 1} we have that $\bfK_\B$ has a Fra\"iss\'e limit $\F$ in $V$. In a generic extension presenting $\B$, the age $\bfK_\B$---and, hence, the Fra\"iss\'e limit $\F$---will be countable.  Then $\F\cong\B$, by the uniqueness of countable Fra\"{i}ss\'{e} limits, so $\F$ is the required structure $\infty\omega$-equivalent to $\mathcal{B}$ which lives in $V$.  
\end{proof}

We are now ready to prove the main positive result of this section.

\begin{theorem}
\label{colom1}
Let $\A$ be a structure in an extension $W$ of $V$, and assume $\A$ is countable in $W$.
Suppose $\A$ is generically presentable by a forcing notion that does not make $\om_2$ countable.  
Then there is a copy of $\A$ in $V$, with cardinality at most $\aleph_1$ in~$V$.
\end{theorem}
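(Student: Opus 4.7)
The plan is to apply the Scott-analytic machinery developed in this section to the Morleyized structure $\hat{\A}$ rather than to $\A$ itself, and then take a reduct in $V$.

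First I would record two facts about $\hat{\A}$. By Lemma \ref{le: Ahat}, $\hat{\A}$ is generically presentable by the same forcing as $\A$; and by Corollary \ref{anothersizelemma}, both $|\hat{\L}|$ and $|\bfK_{\hat{\A}}|$ are at most $\aleph_1$ in $V$. The second and less routine fact is that $\hat{\A}$ is $\om$-homogeneous in the sense of Definition \ref{def: bf system}: because $\A$ is $\Lan^{\A}_{sr(\A)}$-atomic, every complete $\Lan^{\A}_{sr(\A)}$-type realized in $\hat{\A}$ is generated by a single quantifier-free $\hat{\L}$-formula, so two tuples satisfying the same quantifier-free $\hat{\L}$-formulas realize the same complete $\hat{\L}$-type. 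A one-point extension $\abar c$ of a tuple $\abar$ is then isolated by some quantifier-free $\psi(\xbar,y)$, and applying $\exists y\,\psi$ on the other side produces the matching element $d$. Thus the hypotheses of Corollary \ref{presenting ultrahomogeneous} are satisfied by $\hat{\A}$.

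Invoking that corollary produces a structure $\F \in V$ with $\F \equiv_{\infty\om} \hat{\A}$. Tracing through the construction in Lemma \ref{lem: Fraisse aleph 1}, $\F$ is assembled as a chain of length $\om_1$ of countable approximations, so $|\F| \leq \aleph_1$ in $V$. Now take the $\L$-reduct $\F \upto \L$. Since $\L \subseteq \hat{\L}$, the equivalence $\F \equiv_{\infty\om} \hat{\A}$ restricts to
\[
\F\upto\L \;\equiv_{\infty\om}\; \hat{\A}\upto \L \;=\; \A.
\]
By Lemma \ref{potential isomorphism}, $\F\upto\L$ and $\A$ are isomorphic in any generic extension in which both are countable; collapsing $|\F|$ to $\om$ over $W$ yields such an extension, so $\F \upto \L$ is a copy of $\A$ in $V$ (in the sense of Remark \ref{remark on copies}) of cardinality at most $\aleph_1$, as required.

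The main obstacle, and the reason for passing to $\hat{\A}$ rather than working with $\A$ directly, is producing the back-and-forth property needed to invoke Corollary \ref{presenting ultrahomogeneous}: a generically presentable structure need not be $\om$-homogeneous, but the Morleyization at the Scott rank repairs exactly this defect while preserving the size bounds from Corollary \ref{anothersizelemma}. Once that step is in place, the rest of the argument is a direct combination of the previously established lemmas.
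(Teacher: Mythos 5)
Your proof is correct and follows essentially the same route as the paper's: pass to the Morleyization $\Ahat$, use the preservation of $\om_2$ to bound $|\Lhat|$ and $|\bfK_{\Ahat}|$ by $\aleph_1$ in $V$, build a Fra\"iss\'e limit in $V$ via Corollary \ref{presenting ultrahomogeneous}, and recover a copy of $\A$ of size at most $\aleph_1$. The only place you are slightly more optimistic than the paper is the final step: the paper notes that $\Lhat$ need not literally contain $\L$ (for instance if two $\L$-symbols have the same interpretation, or if $\L$ has function symbols, since ages are formed over relational languages), so rather than taking a literal reduct $\F\upto\L$ one \emph{decodes} the interpretations of the $\L$-symbols from $\Ahat$ --- a cosmetic adjustment that does not affect the argument.
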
 

(Recall that the conclusion of the theorem should be read as: there is a structure $\B\in V$ of cardinality at most $\aleph_1$ in~$V$, which is isomorphic to $\A$ in $W$. 
See Remark \ref{remark on copies}.)

\begin{proof} 

Let $\mathcal{L}$ be the language of $\A$. Since $\A$ is generically presentable, by Lemma \ref{presenting Scott analysis}, we know that $\Lhat$ is in $V$ and $\Ahat$ is generically presentable.  Consider some generic extension $V[G]$ by a forcing which generically presents $\A$ and which does not make $\om_2$ countable. Using in $V[G]$ the fact that Scott ranks of countable structures are countable, since $\om_2^V$ is still uncountable in $V[G]$ the language $\hat{\L}$ has size $\leq\aleph_1$ in $V$.  This implies that $\bfK_{\Ahat}$ is in $V$ by Lemmas \ref{le: Ahat} and \ref{presenting ages} and has size $\leq\aleph_1$ in $V$ by \ref{anothersizelemma}. Now, we can apply Corollary \ref{presenting ultrahomogeneous} to get a copy of $\Ahat$ which lives in $V$ (of course, $\Ahat$ need not be countable in $V$). Intuitively, we now want to take the reduct of this copy to $\mathcal{L}$, but $\hat{\mathcal{L}}$ need not include $\mathcal{L}$ (for instance, if two $\mathcal{L}$-symbols have the same interpretation); instead, from $\Ahat$ we can now ``decode" the correct interpretations of each of the symbols in $\mathcal{L}$, and thus produce a copy of $\A$ itself.
\end{proof}

Note that Theorem \ref{colom1} does not directly imply Remark \ref{scottom1}, since the latter concludes that the generically presentable structure in question has a {\em countable} copy in $V$. 

\bigskip

We may apply Theorem \ref{colom1} to prove the following.

\begin{theorem}[Harrington, unpublished]
If $T$ is a counterexample to Vaught's Conjecture, then for each $\alpha < \omega_2$, $T$ has a model of size $\aleph_1$ with Scott rank $\geq\alpha$.
\end{theorem}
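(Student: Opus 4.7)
The case $\alpha < \omega_1$ is immediate from Morley's classical theorem in $V$, so assume $\omega_1 \leq \alpha < \omega_2$. The plan is to force to collapse $\omega_1^V$ to $\omega$, find a countable model of $T$ of Scott rank at least $\alpha$ in the resulting extension, and then pull it back to $V$ via Theorem \ref{colom1}.

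Let $\PP = \mathrm{Col}(\omega, \omega_1)$ be the L\'evy collapse. Since $|\PP| = \aleph_1$, $\PP$ is $\aleph_2$-cc and hence preserves $\omega_2^V$; in any $\PP$-generic $V[G]$ we have $\omega_1^{V[G]} = \omega_2^V$, so $\alpha$ becomes a countable ordinal in $V[G]$. Next I would argue that $T$ has, in $V[G]$, a countable model of Scott rank $\geq \alpha$. By Morley's classical analysis, ``$T$ is a Vaught counterexample'' is equivalent (modulo Morley's dichotomy) to the $\Pi^1_2$ statement ``for every countable ordinal $\beta$ there is a countable model of $T$ of Scott rank $\geq \beta$,'' which by Shoenfield absoluteness persists to $V[G]$ with $\omega_1$ reinterpreted. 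Since $\alpha < \omega_1^{V[G]}$, this gives a countable model $\mathcal{A} \in V[G]$ with $\mathcal{A} \models T$ and $sr(\mathcal{A}) \geq \alpha$.

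The main technical step is to arrange that $\mathcal{A}$ is generically presentable by $\PP$, so that Theorem \ref{colom1} applies. The idea is to make the choice of $\mathcal{A}$ canonical: working from $V$-data, for example Morley's tree of partial Scott analyses of $T$ together with a fixed well-ordering of its nodes (both of which are sets in $V$), one selects $\mathcal{A}$ ``uniformly across generic filters.'' A product-forcing symmetry argument in the spirit of the proof of Lemma \ref{Name}---taking $G$ and $H$ mutually $\PP$-generic and comparing the two versions of the canonical choice inside $V[G\times H]$---then forces the resulting countable models to be isomorphic, which in turn yields a single $\PP$-name $\nu$ such that $\nu[G']$ is a copy of $\mathcal{A}$ for every $\PP$-generic $G'$. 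I expect this to be the main obstacle, since the length-$\alpha$ branches of Morley's tree only appear after forcing (in $V$ they are bounded by $\omega_1^V$), so canonicity has to be extracted carefully from the combinatorics of the tree together with the homogeneity of $\PP$.

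Once generic presentability is established, Theorem \ref{colom1} produces a copy $\mathcal{B} \in V$ with $|\mathcal{B}| \leq \aleph_1$. First-order truth and Scott rank are both isomorphism-invariant (computed in any universe containing both structures), so $\mathcal{B} \models T$ and $sr(\mathcal{B}) \geq \alpha \geq \omega_1$. Since every countable structure has countable Scott rank, $\mathcal{B}$ must be uncountable in $V$, and hence $|\mathcal{B}| = \aleph_1$ exactly. Thus $\mathcal{B}$ is the desired model of $T$ of size $\aleph_1$ with Scott rank $\geq \alpha$.
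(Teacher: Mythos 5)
Your overall architecture is exactly the paper's: collapse $\omega_1$ with $\mathrm{Col}(\omega,\omega_1)$ (which preserves $\omega_2$), use absoluteness of ``$T$ is a counterexample to Vaught's conjecture'' to find a countable model $\mathcal{A}\models T$ of Scott rank $\geq\alpha$ in $V[G]$, show $\mathcal{A}$ is generically presentable, and invoke Theorem \ref{colom1} plus absoluteness of Scott rank to land a copy of size $\aleph_1$ in $V$. But the step you yourself flag as ``the main obstacle'' --- generic presentability of $\mathcal{A}$ --- is precisely the content of the proof, and your sketch of it does not close the gap. You propose to make the choice of $\mathcal{A}$ canonical via a well-ordered Morley tree and then run a Lemma \ref{Name}-style symmetry argument; as you note, the relevant branches of that tree only exist after forcing, so it is not clear what $V$-data the canonical selection is supposed to be made from, and you do not say how to extract it. (This canonical-tree route is essentially the later Baldwin--Friedman--Koerwien--Laskowski proof mentioned in the paper's remark, and carrying it out requires showing the generic Morley tree is forcing-invariant --- a substantial argument you have not supplied.)

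The paper's actual argument sidesteps canonicity entirely, and this is the idea you are missing. Take \emph{any} name $\nu$ with $\nu[G]$ a countable model of $T$ of Scott rank $\geq\alpha$, and suppose it fails to generically present a single isomorphism type: then some condition $(p,p)$ in $\mathbb{P}^2$ forces $\nu[\dot{g_0}]\not\cong\nu[\dot{g_1}]$ for mutually generic filters. Now force with a poset $\mathbb{Q}$ adding a perfect set of mutually $\mathbb{P}$-generic filters; evaluating $\nu$ along each one yields a perfect set of pairwise non-isomorphic countable models of $T$. Since ``$T$ has no perfect set of pairwise non-isomorphic countable models'' is part of the $\Pi^1_2$ statement that $T$ is a Vaught counterexample, Shoenfield absoluteness gives a contradiction. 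So $\nu$ automatically names a fixed isomorphism type across all generics, and generic presentability comes for free from the hypothesis on $T$ rather than from any uniformity in how the model was chosen. Without this (or a completed version of your canonicity argument), your proof is incomplete at its central step.
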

\begin{proof} 
Recall that if $T$ is a counterexample to Vaught's conjecture it has countable models of arbitrary Scott rank below $\om_1$.
Being a counterexample to Vaught's conjecture is a $\Pi^1_2$ property (\cite{Mor70}; see also \cite{Sac07}, Proposition 5.1) and hence absolute.

Let $\PP=\om_1^{<\om}$ be the usual Levy collapse of $\om_1$ and let $G$ be $\PP$-generic. Since $T$ is a counterexample to Vaught's conjecture, in $V[G]$ we have a countable model $\B$ of Scott rank $\geq\alpha$. We claim that $\B$ is generically presentable over $V$ by $\PP$.
This would give us the claimed result: since $\PP $ does not collapse $\om_2$, by Theorem  \ref {colom1}, we would have a copy of $\B$ of size $\aleph_1$ in $V$, and since Scott rank is absolute, this copy is as wanted.

Let us now prove that $\B$ is indeed generically presentable over $V$ by $\PP$.
Fix a $\PP$-generic $G$, and a name $\nu\in V$ for $\B$ in $V[G]$.
The idea is to show that whenever we have two mutually generics, $\nu$ gives us non-isomorphic models of $T$, and hence if we add continuum many mutually generics, we get continuum many different countable models of $T$. 
It is enough to show that for every $H$ which is $\PP$-generic over $V[G]$, there is a structure in $V[H]$ that is isomorphic within $V[G][H]$ to $\nu[G]$.
Suppose this is not the case.
Then, there is some $\PP$ condition $q$ forcing, in $V[G]$, that there is no copy of $\B$ in $V[H]$ for every extension $H$ of $q$.
Since this sentence is invariant under finite changes in $H$, we can assume $q$ is the empty condition.
Now, since $G$ is generic over $V$, there is a $p\subset G$ which forces this.
Assume that $p$ also forces that $\nu[G]$ is a model of $T$.
So, we get that, the $\PP\times\PP$-condition $(p,p)$ forces that $\nu[G]$ and $\nu[H]$ are not isomorphic in $V[G][H]$, and are both countable models of $T$.

Now, consider a forcing notion $\QQ$ that adds perfectly many $\PP$-generics.
(This is quite standard: for instance let $\QQ$ be the set of finite partial maps from $2^{<\om}$ to $\om_1^{<\om}$ and then obtain the $\PP$ generics by concatenating the $\om_1^{<\om}$-strings along each path in $2^\om$.)
In this generic extension of $V$, we get continuum many countable models of $T$. Since being a counterexample to Vaught's conjecture is absolute, this is a contradiction.
\end{proof}

\begin{remark} Recently, Baldwin, S.-D. Friedman, Koerwien, and Laskowski \cite{BFKL14} have given a new proof of Harrington's result using similar genericity arguments; their proof uses a generic version of the Morley tree, which they show is invariant across forcing extensions.
\end{remark}

Finally, we can use Theorem \ref{colom1} to give a partial positive answer to Slaman's question:

\begin{corollary}\label{computinggen} Suppose $\A$ lives in an extension of the universe and $\A\legen\B$ for some $\B\in V$ with cardinality $\le\aleph_1$. 
Then $\A$ has a copy in $V$.
\end{corollary}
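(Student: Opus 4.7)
The plan is to reduce this to Theorem \ref{colom1} (or the equivalent Theorem \ref{notcollapsingom2}) by exhibiting a forcing notion that does not make $\om_2$ countable and that generically presents $\A$. The natural choice is the Levy collapse $\PP = \mathrm{Col}(\om,\om_1)$, consisting of finite partial functions from $\om$ to $\om_1$. Since $|\PP|=\aleph_1$, the poset has the $\aleph_2$-c.c., so $\om_2$ is preserved by $\PP$-forcing; this handles the forcing-theoretic hypothesis of Theorem \ref{colom1} immediately.

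The heart of the argument is to verify that $\A$ is generically presentable by $\PP$. Let $G$ be $\PP$-generic (in the sense of the paper's conventions, so that $W[G]$ is a well-defined generic extension). Since $|\B|\le\aleph_1$ in $V$ and $\om_1^V$ becomes countable in $V[G]$, the structure $\B$ is countable in $V[G]$; fix any $\om$-copy $B\in V[G]$ of $\B$. Because $\A$ is countable in $W$, both $\A$ and $\B$ are countable in $W[G]$, so by hypothesis $W[G]\models \A\lemuch\B$. In particular, applied to the chosen copy $B$, there is a Turing index $e\in\om$ such that $\Phi_e^B$ is an $\om$-copy of $\A$ in $W[G]$.

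Now the key observation: since $e\in\om$ is absolute and $B\in V[G]$, the structure $\Phi_e^B$ actually lies in $V[G]$. The isomorphism $\Phi_e^B\cong\A$ lives in $W[G]$, but by the definition of presentability (\emph{cf.}\ Remark \ref{remark on copies}) this is all we need. Hence $V[G]$ contains an $\om$-copy of $\A$, so $\A$ is generically presentable by $\PP$ over $V$. Theorem \ref{colom1} then yields the desired copy of $\A$ in $V$, of cardinality at most $\aleph_1$.

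The main obstacle I expect is the usual subtlety about whether a filter $G$ that is $\PP$-generic over $V$ is automatically generic over the potentially larger universe $W$ containing $\A$; this is the standard ambiguity that arises whenever one forces over class-sized inner models. I would handle it by appealing to the convention, explicit in Remark \ref{remark on copies}, of working with countable transitive models of $\ZFC$ so that genericity is unproblematic, or equivalently by using the name-existence Lemma \ref{Name} to reduce to this case once presentability has been verified on a rich enough class of generics.
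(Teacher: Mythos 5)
Your proposal is correct and follows essentially the same route as the paper: force with $\om_1^{<\om}$ (which preserves $\om_2$), observe that $\B$ becomes countable so the Muchnik reduction yields an $\om$-copy of $\A$ inside $V[G]$, conclude that $\A$ is generically presentable by this forcing, and apply Theorem \ref{colom1}. The paper compresses the presentability verification into ``a fortiori, there is a copy of $\A$ in $V[G]$,'' whereas you spell out the $\Phi_e^B\in V[G]$ step explicitly, but the argument is the same.
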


\begin{proof} Let $\PP $ be a forcing notion that collapses $\om_1$ while keeping $\om_2$ uncountable, such as $\PP =\omega_1^{<\omega}$. Let $V[G]$ be a generic extension by $\PP $. Then $\B$ is countable in $V[G]$, and, a fortiori, there is a copy of $\A$ in $V[G]$.  
It follows that $\A$ is $\PP$-generically presentable. Then by Theorem~$\ref{colom1}$, there is a copy of $\A$ in $V$.
\end{proof}

\subsection{Collapsing $\om_2$ to $\om$.}\label{om2bad}

We close this section by presenting a strong negative result, coming from a construction due to Shelah and Laskowski \cite{SL93}. Throughout the rest of this section, we abbreviate the linear order $(\omega_2, <)$ by ``$\omega_2$." 

\begin{theorem}\label{colom2} There is a structure $\mathcal{A}$, generically presentable by any forcing making $\omega_2$ countable, but with no copy in $V$.
\end{theorem}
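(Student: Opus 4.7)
The plan is to deploy the Shelah–Laskowski construction from \cite{SL93} to produce, in $V$, a class $\bfK$ of finite structures in some relational language $\L$, of size $\aleph_2$, satisfying the hereditary, joint embedding, and amalgamation properties, but admitting no ultrahomogeneous $\L$-structure in $V$ with age $\bfK$. This is exactly the foreshadowed Remark \ref{nobiglimit} showing that Lemma \ref{lem: Fraisse aleph 1} genuinely fails for ages of size $\aleph_2$, and it is the main obstacle; I will simply cite it rather than reconstruct it.

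Once $\bfK$ is in hand, fix any generic extension $V[G_0]$ in which $\omega_2^V$ is countable. Inside $V[G_0]$ the class $\bfK$ is countable and still has HP, JEP, and AP (these are absolute to any transitive model containing $\bfK$), so the classical countable Fra\"iss\'e theorem yields a unique countable Fra\"iss\'e limit of $\bfK$ in $V[G_0]$; let $\A$ be this limit. This is the structure I claim works.

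To verify generic presentability, let $\PP\in V$ be any forcing making $\omega_2^V$ countable, and let $H$ be $\PP$-generic over $V$. Again $\bfK$ is countable in $V[H]$ and has HP, JEP, AP, so $V[H]$ contains a countable Fra\"iss\'e limit $\A^{H}$ of $\bfK$ with domain $\omega$. Choosing the $G_0$ used to define $\A$ and the filter $H$ mutually generic (by passing to a product-forcing extension $V[G_0][H]$, as in the proof of Lemma \ref{UNI}), both $\A$ and $\A^{H}$ live together and are countable Fra\"iss\'e limits of the same age $\bfK\in V$; by uniqueness of countable Fra\"iss\'e limits they are isomorphic in $V[G_0][H]$. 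Hence $\A^{H}\in V[H]$ is a copy of $\A$, witnessing generic presentability by $\PP$.

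Finally, suppose toward a contradiction that some $\C\in V$ is a copy of $\A$ (the isomorphism lying in $V[G_0]$). Both the property of being ultrahomogeneous (presence of a back-and-forth family on finite substructures) and the age of a structure are preserved under isomorphism, even when the isomorphism lives in a larger set-theoretic universe. Since $\A$ is ultrahomogeneous with age $\bfK$, the same holds for $\C$; but this contradicts the defining property of $\bfK$. Thus no copy of $\A$ exists in $V$, completing the proof.
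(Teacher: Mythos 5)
Your reduction is sound, but it takes a genuinely different route from the paper, and it front-loads all of the substantive content into Corollary \ref{nobiglimit}. The paper proves Theorem \ref{colom2} directly: it takes the Laskowski--Shelah theory $T$ with its totally indiscernible sort $\Vtt$ in the countable model $\M_0$, forms the two-sorted structure $\M_{\om_2}$ pairing $\om_2$ with $\Vtt^{\M_0}$ (well-defined up to isomorphism by total indiscernibility, which is what gives generic presentability), and rules out a copy in $V$ by the absoluteness of ``$\M_\C$ is atomic over $\C$'' together with the fact that $T(A)$ has no atomic model when $|A|=\aleph_2$; Corollary \ref{nobiglimit} is then extracted afterwards from the same construction. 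You invert this order: you take the bad age $\bfK$ as given, let $\A$ be the countable Fra\"iss\'e limit of $\bfK$ in a collapse of $\om_2$, get generic presentability from the uniqueness of countable Fra\"iss\'e limits (in place of total indiscernibility), and get non-existence in $V$ from the absoluteness of the age and of the back-and-forth property of the set of finite partial isomorphisms (in place of absoluteness of atomicity). Those steps all check out: finite substructures, HP/JEP/AP, and ultrahomogeneity are absolute between $V$ and its extensions, so a copy of $\A$ in $V$ would be a Fra\"iss\'e limit of $\bfK$ in $V$. What your version buys is a pleasing symmetry with the positive direction (Lemma \ref{lem: Fraisse aleph 1} and Corollary \ref{presenting ultrahomogeneous}): the dichotomy at $\om_2$ becomes exactly the question of whether ages of that size admit limits. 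What it costs is that your argument is not self-contained until Corollary \ref{nobiglimit} has been proved \emph{first}, independently of Theorem \ref{colom2}; in the paper its proof leans on the objects $T$, $\M_0$, and $T(B)$ introduced inside the proof of \ref{colom2}, so as written your citation is presentationally (though not logically) circular, and the real work --- manufacturing from \cite{SL93} an age of size $\aleph_2$ with HP, JEP, and AP but no limit --- is precisely the part you have deferred.
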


\begin{proof} 
Laskowski and Shelah \cite{SL93} gave an example of an elementary first order theory $T$, in a countable language, such that:
\begin{enumerate}
\item The language has a sort $\Vtt$ such that, for every model $\M$ of $T$ and every subset $A\subseteq \Vtt^\M$, $T(A)$ has an atomic model if and only if $|A|\leq \aleph_1$.
\item $T$ has a countable model $\M_0$ such that $\Vtt^{\M_0}$ is totally indiscernible in the sense that any permutation of $\Vtt^{\M_0}$ extends to an automorphism of $\M_0$. 
Furthermore, $\M_0$ is atomic over $\Vtt^{\M_0}$.
\end{enumerate}

For $\mathcal{C}$ a countable structure, let $\mathcal{M}_\mathcal{C}$ be the two-sorted structure with one sort corresponding to a copy of $\mathcal{C}$, one sort corresponding to a copy of $\mathcal{M}_0$, and with a function symbol $f$ providing a bijection between $\mathcal{C}$ and $\Vtt^\mathcal{M}_0$. 
Since the elements of $\Vtt^{\mathcal{M}_0}$ are totally indiscernible, any two choices of $f$ yield isomorphic structures, so $\mathcal{M}_\mathcal{C}$ is well-defined.

Now consider the ``structure'' $\mathcal{M}_{\om_2}$ which lives in any extension of the universe where $\om_2$ is countable.  
Thus, $\M_{\om_2}$ is generically presentable by $Col(\om_2,\om)$.
However, there is no copy of $\M_{\om_2}$ in $V$: 
Since if the first sort is really $\om_2$, of size $\aleph_2$, then in the second sort, the predicate $\Vtt$ has size $\aleph_2$.
But, by the assumption on $\M_0$, $\M_\C$ is always atomic over $\C$ (a fact that is absolute), and by the assumption on $T$, $T(\Vtt^{\M_{\om_2}})$ has no atomic models.
\end{proof}    

The structure of Laskowski and Shelah also provides a counterexample to a natural extension of Lemma \ref{lem: Fraisse aleph 1}.  

\begin{cor}\label{nobiglimit}
There is an age $S$ of size $\aleph_2$ with the Hereditary, Joint Embedding, and Amalgamation properties but for which there is no Fra\"iss\'e limit.
\end{cor}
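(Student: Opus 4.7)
The plan is to modify the Laskowski-Shelah structure from Theorem \ref{colom2} by expanding the language with $\aleph_2$ unary predicates, one naming each element of the $\C$-sort. Take $T$, $\M_0$, $\L$ as in the proof of Theorem \ref{colom2}, let $\L^+ = \L \cup \{P_\alpha : \alpha < \om_2\}$, and in any generic extension collapsing $\om_2$ to $\om$, let $\M^+$ be the natural expansion of $\M_{\om_2}$ in which $P_\alpha$ is interpreted as the singleton $\{\alpha\}$ on the $\C$-sort. Set $S := \bfK_{\M^+}$.

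The first step is to verify $S \in V$ and $|S|^V = \aleph_2$. Since $\M^+$ is generically presentable by $\mathrm{Col}(\om,\om_2)$, Lemma \ref{presenting ages} gives $S \in V$. For cardinality, the one-element $\C$-sort substructures carrying distinct labels $P_\alpha$ are pairwise non-isomorphic, contributing $\aleph_2$ elements to $S$; a direct count gives $|S| \leq \aleph_2$ as well.

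Next I would check that $S$ has HP, JEP, and AP. HP is immediate. For JEP and AP the key observation is that $\M^+$ is \emph{rigidly ultrahomogeneous}: every $\C$-sort element is pinned down by its unique $P_\alpha$-label, every element of $\Vtt^{\M_0}$ is pinned down by $f$ applied to a labeled element, and every element of $\M_0$ outside $\Vtt^{\M_0}$ is pinned down by the atomicity of $\M_0$ over $\Vtt^{\M_0}$. Consequently, each finite $\B \in S$ has a unique embedding into $\M^+$, so any two structures in $S$ over a common subobject can be amalgamated simply by taking the union of their images inside $\M^+$.

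For the main contradiction, suppose $\A \in V$ is a Fra\"iss\'e limit of $S$, and pass to a generic extension $V[G]$ large enough to collapse both $\om_2$ and $|\A|^V$ to $\om$. In $V[G]$, both $\A$ and $\M^+$ are countable Fra\"iss\'e limits of $S$, so by the uniqueness of countable Fra\"iss\'e limits, $V[G] \models \A \cong \M^+$. Taking reducts to $\L$ then gives $\A\upto\L \cong \M_{\om_2}$ in $V[G]$, so $\A\upto\L \in V$ would be a copy of $\M_{\om_2}$ in $V$, contradicting Theorem \ref{colom2}. The main subtlety is producing $S$ as an object of $V$ with the right cardinality and amalgamation properties despite its definition passing through a generic extension; this is exactly what Lemma \ref{presenting ages} together with the rigid ultrahomogeneity of $\M^+$ delivers.
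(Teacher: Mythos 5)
There is a genuine gap at the point where you declare $\M^+$ ``rigidly ultrahomogeneous.'' Atomicity of $\M_0$ over $\Vtt^{\M_0}$ says that every tuple realizes a \emph{principal} type over $\Vtt^{\M_0}$; it does not say that tuples are \emph{definable} over $\Vtt^{\M_0}$, and a nonalgebraic principal type has many realizations, so the elements of $\M_0$ outside $\Vtt^{\M_0}$ are not ``pinned down,'' $\M^+$ need not be rigid, and finite substructures need not embed uniquely. (Naming the $\C$-sort and hence $\Vtt^{\M_0}$ via the $P_\alpha$ and $f$ is fine; it is the rest of $\M_0$ that causes trouble.) This infects the argument in two places. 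First, your verification of JEP and AP rests entirely on the unique-embedding claim. Second, and more fatally, atomicity yields homogeneity with respect to \emph{elementary} types, whereas being a Fra\"iss\'e limit in this paper's sense requires the back-and-forth property for isomorphisms of finite substructures, i.e.\ for \emph{quantifier-free} types. Without arranging that quantifier-free types isolate full types, $\M^+$ need not be a Fra\"iss\'e limit of its own age $S$; hence in $V[G]$ the countable Fra\"iss\'e limit of $S$ (which, granting HP, JEP, AP, exists once $S$ is countable) need not be isomorphic to $\M^+$, and the step ``$\A\cong\M^+$ in $V[G]$ by uniqueness of countable Fra\"iss\'e limits'' collapses, taking the final contradiction with Theorem \ref{colom2} with it.

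The repair is exactly the step the paper takes and you omit: Morleyize, i.e.\ add predicate symbols for the principal types, so that finite quantifier-free types determine orbits. (The paper in fact avoids forcing altogether here: it defines the age inside $V$ as the set of finite substructures of the atomic models of the theories $T(B)$ for $\vert B\vert\le\aleph_1$, with predicates for principal types added, and observes that a Fra\"iss\'e limit would produce an atomic model of $T(A)$ with $\vert A\vert=\aleph_2$, which Laskowski--Shelah rule out.) Your route through generic presentability, Lemma \ref{presenting ages}, and Theorem \ref{colom2} could be made to work, but only after this Morleyization together with an actual proof that the resulting expansion is ultrahomogeneous; the naming predicates $P_\alpha$ alone do not deliver this. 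Two smaller points: the age machinery is set up for relational languages, so the bijection $f$ must be replaced by its graph; and the cardinality count $\vert S\vert=\aleph_2$ should be rechecked after Morleyization (it does still hold).
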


\begin{proof} 

Consider the theory $T(A) = Th(\mathcal{M}_0,a_{a\in A})$, where $A = A^\mathcal{M}$ has size $\aleph_2$.  The principal types are dense, but $T(A)$ has no atomic model.  We add predicate symbols for the principal types.  For $B\subseteq A$ of size up to $\aleph_1$, there is an atomic model of the corresponding theory $T(B) = Th(\mathcal{M}_0,a_{a\in B})$.  Let $K$ consist of the finite substructures of the atomic models of the theories $T(B)$.  In total, what we have is appropriate to be the age for an atomic model of $T(A)$.  That is, we have the Hereditary, Joint Embedding, and Amalgamation properties  (essentially \cite{SL93}, pg. 3).  However, any Fra\"iss\'e limit of $S$ would yield an atomic model of $T(A)$, so the Fra\"iss\'e limit cannot exist.
\end{proof}

\section{Generically presentable rigid structures}\label{presentrigid}

In the previous section, we gave a complete characterization of those posets $\mathbb{P}$ with the property that every structure generically presentable by $\mathbb{P}$ has a copy already in the ground model. In this section, we examine the dual question: what properties of {\em structures} ensure that generic presentability implies the existence of a copy in the ground model? Specifically, we extend Solovay's Theorem \ref{Solovay} to structures that are sufficiently ``set-like:"

\begin{thm}\label{Rig} Suppose $\mathcal{N}$ is a rigid structure with language $\L$, generically presentable by $\mathbb{P}$. Then there is a copy of $\mathcal{N}$ in $V$.
\end{thm}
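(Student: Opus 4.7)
The plan is to combine Lemma~\ref{Name} with rigidity in a Solovay-style construction. By Lemma~\ref{Name}, fix a $\PP$-name $\nu \in V^\PP$ such that $\nu[G] \cong \mathcal{N}$ for every $\PP$-generic filter $G$. Rigidity provides the essential additional ingredient: for any $G_0, G_1$ with $G_0 \times G_1$ being $\PP \times \PP$-generic over $V$, there is a \emph{unique} isomorphism $\phi_{G_0, G_1} \colon \nu[G_0] \to \nu[G_1]$ in $V[G_0, G_1]$. Uniqueness makes $\phi_{G_0, G_1}$ canonical---represented by a single $\PP^2$-name $\dot\phi$ in $V$---and forces coherence: for any mutually generic triple $G_0, G_1, G_2$, rigidity in $V[G_0, G_1, G_2]$ forces $\phi_{G_0, G_2} = \phi_{G_1, G_2} \circ \phi_{G_0, G_1}$, since both sides are isomorphisms $\nu[G_0] \to \nu[G_2]$ in that model.

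I would then construct a copy $\mathcal{N}^* \in V$ of $\mathcal{N}$ as follows. Call a $\PP$-name $\tau$ \emph{symmetric} if $1_\PP \Vdash \tau \in \nu$ and $1_{\PP^2} \Vdash \dot\phi(\tau[\dot G_0]) = \tau[\dot G_1]$. Take the domain of $\mathcal{N}^*$ to be the symmetric names modulo the equivalence $\tau_0 \sim \tau_1 \iff 1_\PP \Vdash \tau_0 = \tau_1$, and interpret each relation symbol $R$ of arity $n$ by $R^{\mathcal{N}^*}([\tau_1], \ldots, [\tau_n]) \iff 1_\PP \Vdash R^\nu(\tau_1, \ldots, \tau_n)$, with analogous clauses for function and constant symbols. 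Symmetry and the coherence property make these interpretations well-defined on equivalence classes. For any $\PP$-generic $G$, the assignment $[\tau] \mapsto \tau[G]$ is then a well-defined, injective $\mathcal{L}$-homomorphism $\mathcal{N}^* \to \nu[G]$, since for symmetric $\tau_0, \tau_1$ we have $\tau_0[G] = \tau_1[G]$ iff $1_\PP \Vdash \tau_0 = \tau_1$ (using mutual genericity to transfer equality across coordinates).

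The main obstacle is surjectivity: every $a \in \nu[G]$ must arise as $\tau[G]$ for some symmetric $\tau$. The strategy is to use the canonical isomorphisms to propagate $a$ coherently---for each mutually generic $H$, the element $\phi_{G, H}(a) \in \nu[H]$ is uniquely determined, so $a$ selects a canonical element in every generic extension, and these selections cohere on triples. To realize this coherent selection as a name in $V$, one starts with any $\tau_0 \in V$ with $\tau_0[G] = a$ and a condition $p \in G$ forcing $\tau_0 \in \nu$, and then modifies $\tau_0$ along a maximal antichain below $p$ using the $\PP^2$-forcing relation to enforce symmetry: below each condition, replace $\tau_0$ by the unique name that is forced (by that condition, together with the canonical $\dot\phi$) to be invariant under $\dot\phi$. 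The technical heart of the argument is verifying that such a mixing is well-defined as a $V$-name and produces a symmetric $\tau$ with $\tau[G] = a$; the triple-coherence of the $\phi$'s is what prevents the construction from having inconsistent demands. Once surjectivity is established, $[\tau] \mapsto \tau[G]$ is an isomorphism $\mathcal{N}^* \to \nu[G] \cong \mathcal{N}$ in $V[G]$, which delivers the required copy of $\mathcal{N}$ in $V$.
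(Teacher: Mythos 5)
Your proposal is correct and is essentially the paper's own proof in different packaging: your canonical name $\dot\phi$ for the unique isomorphism plays the role of the paper's relation $\equiv$ together with its Simultaneity Lemma \ref{Sim} (rigidity enters at the same point in both arguments, via uniqueness of the isomorphism between $\nu[G_0]$ and $\nu[G_1]$), your equivalence classes of symmetric names correspond to the paper's $\equiv$-classes of stable pairs $(a,p)$, and your definition of $R^{\mathcal{N}^*}$ via the forcing relation matches the paper's definition of $R^{\mathcal{M}}$. The one piece you defer --- that every $a\in\nu[G]$ is realized by a symmetric name via mixing over a maximal antichain --- is precisely the content of the paper's Genericity and Amalgamation Lemmas \ref{EGA}(2),(3), which do go through by the $\Sigma^1_1$-absoluteness arguments over $\mathbb{P}^2$ and $\mathbb{P}^3$ that you gesture at; just note that the antichain must be maximal in all of $\mathbb{P}$, not merely below your chosen $p\in G$, for $\tau$ to be symmetric with respect to $1_{\mathbb{P}^2}$.
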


\begin{proof} We assume $\mathcal{L}$ is relational; note that since $\mathcal{N}$ is generically presentable, and $\mathcal{L}$ is a set, we have $\mathcal{L}\in V$. By Lemma \ref{Name}, there is a single name $\nu$ which names a copy of $\mathcal{N}$ in any generic extension by $\mathbb{P}$. Without loss of generality, we may assume that $\Vdash dom(\nu)=\omega$. On $\omega\times\mathbb{P}$, we then define the relation $\equiv$ as follows: \[(a, p)\equiv (b, q)\iff (p, q)\Vdash_{\mathbb{P}^2}``\{(a, b)\}\text{ extends to an isomorphism $\nu[\gl]\cong \nu[\gr]$}."\] If $(a, p)\equiv (a, p)$, we say $a$ is {\em stable} in $p$, and we write $\mathbb{M}$ for the set $\{(a, p): \text{$a$ is stable in $p$}\}$.

\begin{lem} The relation $\equiv$ is an equivalence relation on $\mathbb{M}$.
\end{lem}

\begin{proof} Symmetry is clear, and reflexivity is immediate from the definition of $\mathbb{M}$. For transitivity, suppose $(a, p)\equiv (b, q)\equiv (c, r)$, and let $G_0\times G_1$ be $\mathbb{P}^2$-generic over $V$ with $p\in G_0, r\in G_1$; and let $H$ be $\mathbb{P}$-generic over $V[G_0\times G_1]$-generic, with $q\in H$. Then clearly in $V[G_0\times G_1][H]$, there is an isomorphism between $\nu[G_0]$ and $\nu[G_1]$ taking $a$ to $c$; but this is a $\Sigma^1_1$ property, and so already true in $V[G_0\times G_1]$. Thus, $(a, p)\equiv (c, r)$. \end{proof}

Now let $M$ be the set of $\equiv$-classes of elements of $\mathbb{M}$. The basic properties of $M$, which parallel the properties of ages needed for Fra\"iss\'e constructions, are:

\begin{lem}\label{EGA} For $p\in\mathbb{P}, a\in \omega$, 
\begin{enumerate}
\item {\bf (Extension)}  if $a$ is stable in $p$ and $q\le p$, then $a$ is stable in $q$ and $(a, p)\equiv (a, q)$; 
\item {\bf (Genericity)}  there is some $q\le p$ with $a$ stable in $q$; and 
\item {\bf (Amalgamation)}  for $(a_1, p_1), . . . , (a_n, p_n)\in M$, and $s\in\mathbb{P}$, there is some condition $r\in\mathbb{P}$ with $r\le s$ and $c_1, . . . c_n\in X$ such that $(c_1, r), . . . , (c_n, r)\in M$ and $(c_1, r)\equiv (a_1, p_1), . . . , (c_n, r)\equiv (a_n, p_n)$.
\end{enumerate}
\end{lem}

\begin{proof} $(1)$: That $a$ is stable in $q$ is immediate from the definition of stability. To see that $(a, p)\equiv (a, q)$, note that any pair of generics $H_0, H_1$ witnessing the failure of $(a, p)\equiv (a, q)$ would also witness the instability of $(a, p)$.

$(2)$: Consider the condition $(p, p)\in\mathbb{P}^2$. By our assumption on $\nu$, there must be some condition $(q, q')\le (p, p)$ and $a'\in X$ such that 
\[
(q, q')\Vdash_{\mathbb{P}^2} \{(a, a')\}\text{ extends to an isomorphism }\nu[\gl]\cong\nu[\gr].
\] 
It now follows that $a$ is stable in $q$: given $G_0\times G_1$ $\mathbb{P}^2$-generic over $V$ extending $(q,q)$, fix some $H$ which is $\mathbb{P}$-generic over $V[G_0\times G_1]$ with $q'\in H$. Thenm, clearly, in $V[G_0\times G_1][H]$ there is an isomorphism between $\nu[G_0]$ and $\nu[G_1]$ extending $\{(a, a)\}$; but this is a $\Sigma^1_1$ fact, so already true in $V[G_0\times G_1]$.

$(3)$: For simplicity, assume $s=\emptyset$ and $n=2$. Working in $\mathbb{P}^3$, by our assumption on $\nu$ we can find conditions $p_1', p_2', r'\in\mathbb{P}$ and $c_1, c_2\in X$ with $p_1'\le p_1, p_2'\le p_2,$ and \[(p_1', p_2', r')\Vdash_{\mathbb{P}^3}\text{$\{(a_1, c_1)\}$ and $\{(a_2, c_2)\}$ extend to $\nu[G_0]\cong\nu[G_2]$ and $\nu[G_1]\cong\nu[G_2]$,}\] respectively. Applying Lemma \ref{EGA}(2) twice to $r'$ then yields an $r\le r'$ with $c_1, c_2$ stable in $r$, so that $(c_1, r), (c_2, r)\in M$, and it is easily checked that $(c_1, r)\equiv (a_1, p_1)$ and $(c_2, r)\equiv (a_2, p_2)$.
\end{proof}

Finally, the following result is where rigidity is used. Intuitively, rigidity plays the role in our proof that $\omega$-homogeneity plays in standard Fra\"iss\'e limit constructions.

\begin{lem}\label{Sim} {\bf (Simultaneity)} Suppose $p, q\in\mathbb{P}$ and $i_1, . . . , i_n\colon\subseteq \omega\rightarrow \omega$ are partial maps in $V$ with disjoint domains which are each forced by $(p, q)$ in $\mathbb{P}^2$ to extend to isomorphisms $j_1, . . . , j_n\colon \nu[G_0]\cong\nu[G_1]$. Then \[(p, q)\Vdash_{\mathbb{P}^2}\bigcup_{1\le j\le n}i_j\text{ extends to an isomorphism }\nu[G_0]\cong\nu[G_1].\]
\end{lem} 

Note that this result immediately implies the seemingly stronger result in which disjointness of domains is not assumed.

\begin{proof} We will prove the lemma in the case where $n=2$, $p=q$, $i_1=\{(a, a)\}$ and $i_2=\{(b, b)\}$ for some distinct $a, b\in \om$; the general result is no different. Note that the assumption on $i_j$ in this case means just that $a$ and $b$ are stable in $p$.

Let $G_0\times G_1$ be $\mathbb{P}^2$-generic extending $(p, p)$. Then, forced by $(p, p)$, there are isomorphisms $j_1, j_2: \nu[G_0]\cong\nu[G_1]$ with $j_1(a)=a$ and $j_2(b)=b$. Consider the map $j=j_1\circ j_2^{-1}$. This is an automorphism of $\nu[G_1]$, and hence by rigidity must be the identity; so $j_1(b)=b$, since $j_2^{-1}(b)=b$ by assumption on $j_2$. But then $j_1$ is an isomorphism extending $\{(a, a), (b, b)\}$, so $(p, p)$ forces that there is an isomorphism between $\nu[G_0]$ and $\nu[G_1]$ extending $\{(a, a), (b, b)\}$.
\end{proof}

Now we come to the body of the proof of Theorem \ref{Rig}. We can turn $\mathbb{M}$ into an $\mathcal{L}$-structure, $\mathcal{M}$, as follows: writing $\bf(a, p)$ for the equivalence class of $(a, p)\in\mathbb{M}$, for each $n$-ary relation symbol $R\in\mathcal{L}$ we let $R^\mathcal{M}$ be the set of tuples (($\bf a_1, p_1$), . . . , ($\bf a_n, p_n$)) such that
\[ \exists q\in\mathbb{P}, c_1, . . . , c_n\text{ stable in $q$ }(\forall i\le n[\text{($\bf a_i, p_i$)= ($\bf c_i, q$)}]\,\,\wedge\,\, q\Vdash ``\nu\models R(c_1, . . . , c_n)").\]
Informally, this definition ensures that each relation $R$ holds whenever it ought to hold; we will also need the converse result, that each $R$ fails whenever it ought to fail, and this is where Simultaneity will come in.

\begin{lem} Let $G$ be $\mathbb{P}$-generic over $V$. Then $V[G]\models\nu[G]\cong\mathcal{M}$.
\end{lem}

\begin{proof} For $a\in\nu[G]$, let $Stab_a^G=\{p\in G: (a, p)\in \mathbb{M}\}$. Then for every $p, q\in Stab_a^G$, we must have $(a, p)\equiv(a, q)$: since $p, q\in G$, there must be a common strengthening $r\le p, q$; by \ref{EGA}(1), we have $(a, p)\equiv(a, r)$ and $(a, q)\equiv(a, r)$, and  hence $(a, p)\equiv (a, q)$ by transitivity. So the set $\{(a, p): p\in Stab_a^G\}$ is contained in a single $\equiv$-class, and hence corresponds to a single element of $\mathcal{M}$.

Consider the map $i\colon \nu[G]\rightarrow\mathcal{M}\colon a\mapsto \{(a, p): p\in Stab_a^G\}$; We claim that $i$ is an isomorphism. Surjectivity is an immediate consequence of Genericity \ref{EGA}(2), and injectivity follows from the rigidity of $\nu[G]$.

Finally, we must show that $i$ is a homomorphism. For $R$ a relation symbol in $\mathcal{L}$ and $\overline{a}\in\nu[G]$, by \ref{EGA}(3) we have \[(\nu[G]\models R(\overline{a}))\implies (\mathcal{M}\models R(i(\overline{a}))).\] Conversely, suppose $\mathcal{M}\models R(i(\overline{a}))$ and fix $p\in \bigcap_{a\in\overline{a}}Stab_a^G$. Then we must have some $\overline{c}$, $\overline{q}$, $\overline{b}$, and $r$ such that $(c_i, q_i)\equiv (a_i, p)$, $(c_i, q_i)\equiv (b_i, r)$, and $r\Vdash ``\nu\models R(\overline{b})$." But then by Simultaneity \ref{Sim}, since $(a_i, p)\equiv (b_i, r)$, we must have $p\Vdash R(\overline{a})$, and hence $\nu[G]\models R(\overline{a})$.
\end{proof}

This finishes the proof of Theorem \ref{Rig}.
\end{proof}


%
%

\end{document}